\numberwithin{equation}{section}
\numberwithin{figure}{section}
\theoremstyle{plain}
\newtheorem{thm}{\protect\theoremname}[section]
  \theoremstyle{remark}
  \newtheorem{rem}[thm]{\protect\remarkname}
  \theoremstyle{definition}
  \newtheorem{defn}[thm]{\protect\definitionname}
  \theoremstyle{plain}
  \newtheorem{lem}[thm]{\protect\lemmaname}
  \theoremstyle{remark}
  \newtheorem{claim}[thm]{\protect\claimname}
  \theoremstyle{plain}
  \newtheorem{prop}[thm]{\protect\propositionname}
  \theoremstyle{plain}
  \newtheorem{cor}[thm]{\protect\corollaryname}
  \theoremstyle{plain}
  \newtheorem{question}[thm]{\protect\questionname}
\subjclass[2010]{14E15, 14E16, 11S15, 11G25}
\keywords{mass formulas, local Galois representations, quotient singularities, dualities, the McKay correspondence, equisingularities, stringy invariants}
  \providecommand{\claimname}{Claim}
  \providecommand{\corollaryname}{Corollary}
  \providecommand{\definitionname}{Definition}
  \providecommand{\lemmaname}{Lemma}
  \providecommand{\propositionname}{Proposition}
  \providecommand{\questionname}{Question}
  \providecommand{\remarkname}{Remark}
\providecommand{\theoremname}{Theorem}
\begin{document}

\title[Mass formulas and quotient singularities II]{Mass formulas for local Galois representations and quotient singularities
II: dualities and resolution of singularities}

\author{Melanie Machett Wood}

\address{Department of Mathematics, University of Wisconsin-Madison, 480 Lincoln
Drive, Madison, WI 53705 USA, and American Institute of Mathematics,
360 Portage Ave, Palo Alto, CA 94306-2244 USA}

\email{mmwood@math.wisc.edu}

\author{Takehiko Yasuda}

\address{Department of Mathematics, Graduate School of Science, Osaka University,
Toyonaka, Osaka 560-0043, Japan, and Max Planck Institute for Mathematics, Vivatsgasse 7, 53111 Bonn, Germany}

\email{takehikoyasuda@math.sci.osaka-u.ac.jp, highernash@gmail.com}

\maketitle
\global\long\def\AA{\mathbb{A}}
\global\long\def\PP{\mathbb{P}}
\global\long\def\NN{\mathbb{N}}
\global\long\def\GG{\mathbb{G}}
\global\long\def\ZZ{\mathbb{Z}}
\global\long\def\QQ{\mathbb{Q}}
\global\long\def\CC{\mathbb{C}}
\global\long\def\FF{\mathbb{F}}
\global\long\def\LL{\mathbb{L}}
\global\long\def\RR{\mathbb{R}}
\global\long\def\DD{\mathbb{D}}

\global\long\def\bx{\mathbf{x}}
\global\long\def\bf{\mathbf{f}}
\global\long\def\ba{\mathbf{a}}
\global\long\def\bs{\mathbf{s}}
\global\long\def\bt{\mathbf{t}}
\global\long\def\bw{\mathbf{w}}
\global\long\def\bb{\mathbf{b}}
\global\long\def\bv{\mathbf{v}}
\global\long\def\bp{\mathbf{p}}

\global\long\def\cN{\mathcal{N}}
\global\long\def\cW{\mathcal{W}}
\global\long\def\cY{\mathcal{Y}}
\global\long\def\cM{\mathcal{M}}
\global\long\def\cF{\mathcal{F}}
\global\long\def\cX{\mathcal{X}}
\global\long\def\cE{\mathcal{E}}
\global\long\def\cJ{\mathcal{J}}
\global\long\def\cO{\mathcal{O}}
\global\long\def\cD{\mathcal{D}}
\global\long\def\cZ{\mathcal{Z}}
\global\long\def\cK{\mathcal{K}}
\global\long\def\cP{\mathcal{P}}
\global\long\def\cR{\mathcal{R}}

\global\long\def\fs{\mathfrak{s}}
\global\long\def\fp{\mathfrak{p}}
\global\long\def\fm{\mathfrak{m}}

\global\long\def\Spec{\mathrm{Spec}\,}
\global\long\def\Hom{\mathrm{Hom}}
\global\long\def\Tr{\mathrm{Tr}}
\global\long\def\mot{\mathrm{mot}}

\global\long\def\Var{\mathrm{Var}}
\global\long\def\tame{\mathrm{tame}}
\global\long\def\Gal{\mathrm{Gal}}
\global\long\def\Jac{\mathrm{Jac}}
\global\long\def\Ker{\mathrm{Ker}}
\global\long\def\Im{\mathrm{Im}}
\global\long\def\Aut{\mathrm{Aut}}
\global\long\def\st{\mathrm{st}}
\global\long\def\diag{\mathrm{diag}}
\global\long\def\characteristic{\mathrm{char}}
\global\long\def\tors{\mathrm{tors}}
\global\long\def\sing{\mathrm{sing}}
\global\long\def\red{\mathrm{red}}
\global\long\def\Ind{\mathrm{Ind}}
\global\long\def\nr{\mathrm{nr}}
\global\long\def\rep{\mathrm{rep}}
\global\long\def\good{\mathrm{good}}
 \global\long\def\univ{\mathrm{univ}}
\global\long\def\length{\mathrm{length}}
\global\long\def\sm{\mathrm{sm}}
\global\long\def\top{\mathrm{top}}
\global\long\def\rank{\mathrm{rank}}
\global\long\def\Mot{\mathrm{Mot}}
\global\long\def\age{\mathrm{age}\,}
\global\long\def\et{\mathrm{et}}
\global\long\def\hom{\mathrm{hom}}
\global\long\def\tor{\mathrm{tor}}
\global\long\def\KWRep{K_{0}(\mathrm{WRep}_{k,l})}
\global\long\def\chiet{\chi_{\mathrm{\acute{e}t}}}
\global\long\def\hatKWRep{\widehat{K_{0}(\mathrm{WRep}_{k,l})}}
\global\long\def\ord{\mathrm{ord}}
\global\long\def\chist{\chi_{\st}}

\global\long\def\Conj#1{\mathrm{Conj}(#1)}
\global\long\def\Mass#1{\mathrm{Mass}(#1)}
\global\long\def\Inn#1{\mathrm{Inn}(#1)}
\global\long\def\bConj#1{\mathbf{Conj}(#1)}
\global\long\def\Hilb{\mathrm{Hilb}}
\global\long\def\sep{\mathrm{sep}}
\global\long\def\GL#1#2{\mathrm{GL}_{#1}(#2)}
\global\long\def\codim{\mathrm{codim}}
\global\long\def\TrF{\mathrm{TrF}}
\global\long\def\Met{M_{\mathrm{\acute{e}t}}}
\global\long\def\Stab{\mathrm{Stab}}
\global\long\def\AF{\mathrm{AF}}

\begin{abstract}
A total mass is the weighted count of continuous homomorphisms from
the absolute Galois group of a local field to a finite group. In the
preceding paper, the authors observed that in a particular example,
two total masses coming from two different weightings are dual to
each other. We discuss the problem how general such a duality holds
and relate it to the existence of simultaneous resolution of singularities,
using the wild McKay correspondence and the Poincaré duality for stringy invariants. 
We also exhibit several examples. 
\end{abstract}

\section{Introduction}

In the preceding paper \cite{Wood-Yasuda-I} the authors found a close
relationship between mass formulas for local Galois representations,
studied in the number theory, and stringy invariants of singular varieties.
In the present paper, we discuss dualities of mass formulas in relation
with the existence of some kinds of desingularization.

For a local field $K$ and a finite group $\Gamma$, let $G_{K}=\Gal(K^{\sep}/K)$
be the absolute Galois group of $K$ and $S_{K,\Gamma}$ be the set
of continuous homomorphisms $G_{K}\to\Gamma$. For a function $c:S_{K,\Gamma}\to\RR$,
the \emph{total mass }of $(K,\Gamma,c)$ is defined as
\[
M(K,\Gamma,c):=\frac{1}{\sharp\Gamma}\sum_{\rho\in S_{K,\Gamma}}q^{-c(\rho)}
\]
with $q$ the cardinality of the residue field of $K$. 
For the symmetric group $S_n$, 
fixing the
standard representation 
\[
\iota:S_{n}\hookrightarrow \GL n{\cO_{K}}\subset\GL nK
\]
with $\cO_{K}$ the integer ring of $K$, we can associate the Artin
conductor $\ba_{\iota}(\rho)$ to each continuous homomorphism $\rho:G_{K}\to S_{n}$.
According to Kedlaya \cite{MR2354797}, Bhargava's mass formula \cite{MR2354798}
for étale extensions of a local field is expressed as
\[
M(K,S_{n},\ba_{\iota})=\sum_{m=0}^{n-1}P(n,n-m)q^{-m},
\]
where $P(n,n-m)$ is the number of partitions of the integer $n$
into exactly $n-m$ parts. It was found in \cite{Wood-Yasuda-I} that
for another function $\rho\mapsto\bw_{2\iota}(\rho)$ originating
in the wild McKay correspondence \cite{Yasuda:2013fk}, we have 
\[
M(K,S_{n},-\bw_{2\iota})=\sum_{m=0}^{n-1}P(n,n-m)q^{m}.
\]
Here $2\iota$ stands for the direct sum of two copies of the representation
$\iota$. The right hand sides of the two formulas are interchanged
by replacing $q$ with $q^{-1}$: this is what we call a \emph{duality.}
It is then natural to ask how general the duality holds: what about
other groups and other representations. There exists yet another function
$\bv_{\tau}$ on $S_{K,\Gamma}$ for each representation $\tau$ of
$\Gamma$ over $\cO_{K}$. It was shown in \cite{Wood-Yasuda-I} that
if $\tau$ is a permutation representation, then we have $\ba_{\tau}=\bv_{2\tau}$.
It turns out that the function $\bv_{\tau}$ is more appropriate to
discuss dualities for non-permutation representations. The most basic
question we would like to ask is: \emph{when are the total masses
$M(K,\Gamma,\bv_{\tau})$ and $M(K,\Gamma,-\bw_{\tau})$ dual to each
other in the same way as $M(K,S_{n},\ba_{\iota})$ and $M(K,S_{n},-\bw_{2\iota})$
are? }We will observe that the duality does not always hold, but is
closely related to the existence of a \emph{simultaneous resolution
of singularities} or to \emph{equisingularities}. 

To discuss the duality more rigorously, we need to consider total
masses for all unramified extensions of the given local field $K$.
For each integer $r>0$, let $K_{r}$ be the unramified extension
of $K$ of degree $r$. For a representation $\tau:\Gamma\to\GL d{\cO_{K}}$,
we can naturally generalize functions $\bv_{\tau}$ and $\bw_{\tau}$
to ones on $S_{K_{r},\Gamma}$, which we continue to denote by the
same symbols. We then regard total masses $M(K_{r},\Gamma,\bv_{\tau})$
and $M(K_{r},\Gamma,-\bw_{\tau})$ as functions in the variable $r\in\NN$.
When they belong to a certain class of nice functions (which we call
\emph{admissible}), we can define their dual functions $\DD(M(K_{r},\Gamma,\bv_{\tau}))$
and $\DD(M(K_{r},\Gamma,-\bw_{\tau}))$ in such a way that the dual
of the function $q^{r}$ is $q^{-r}$. A duality which we are interested
in is now expressed by the equality
\begin{equation}
\DD(M(K_{r},\Gamma,\bv_{\tau}))=M(K_{r},\Gamma,-\bw_{\tau}).\label{eqintro1}
\end{equation}
We call it the \emph{strong duality}.

It is the \emph{wild McKay correspondence} which relates total masses
to singularities. Given a representation $\tau:\Gamma\to\GL d{\cO_{K}}$,
we have the associated $\Gamma$-action on the affine space $\AA_{\cO_{K}}^{d}$
and the quotient scheme $X:=\AA_{\cO_{K}}^{d}/\Gamma$, which is a
normal $\QQ$-Gorenstein variety over $\cO_{K}$. In general, for
a normal $\QQ$-Gorenstein variety $Y$ over $\cO_{K}$, we can define
the \emph{stringy point count }$\sharp_{\st}(Y)$ as a volume of the
$\cO_{K}$-point set $Y(\cO_{K})$ (see \cite{wild-p-adic}). This
is an analogue of stringy $E$-function defined by Batyrev and Dais  \cite{MR1404917, MR1672108}
and is a generalization of the number of $k$-points on a smooth $\cO_{K}$-variety.
For a $k$-point $y\in Y(k)$, we can similarly define the \emph{stringy
point count along $\{y\}$ (or the stringy weight of $y$)}, denoted
by $\sharp_{\st}(Y)_{y}$, so that we have 
\[
\sharp_{\st}(Y)=\sum_{y\in Y(k)}\sharp_{\st}(Y)_{y}.
\]
Yasuda \cite{wild-p-adic} proved that if the representation $\tau$
is faithful and the morphism $\AA_{\cO_{K}}^{d}\to X$ is étale in
codimension one, then 
\begin{gather*}
\sharp_{\st}(X)=M(K,\Gamma,\bv_{\tau})q^{d}\text{ and} \\
\sharp_{\st}(X)_{o}=M(K,\Gamma,-\bw_{\tau}),
\end{gather*}
where $o$ is the origin of $X(k)$. This is a version of the wild
McKay correspondence discussed in \cite{MR3230848,Yasuda:2013fk,Wood-Yasuda-I,Yasuda:2014fk2}.
Special cases were previously proved in \cite{MR3230848,Wood-Yasuda-I}.
If $X$ has a nice resolution, then $\sharp_{\st}(X)$ and $\sharp_{\st}(X)_{o}$
are explicitly computed in terms of resolution data. Using it, we
can deduce a few properties of the functions $M(K_{r},\Gamma,\bv_{\tau}$)
and $M(K_{r},\Gamma,-\bw_{\tau})$: obtained properties depend on
what sort of resolution exists.
\begin{rem}
The assumption that the morphism $\AA_{\cO_{K}}^{d}\to X$ is étale
in codimension one is just for notational simplicity. We may drop
this assumption by considering the pair of $X$ and a $\QQ$-divisor
on it rather than the variety $X$ itself.
\end{rem}
Let us think of $X$ as a family of singular varieties over $\Spec\cO_{K}$.
If $X$ admits a kind of simultaneous resolution, then 
\[
\frac{\sharp_\st (X)-\sharp_\st (X)_o}{q^r-1}
\]
satisfies a certain self-duality, which can be understood as the Poincaré duality 
of stringy invariants proved in \cite{MR1404917, MR1672108} over complex numbers. 
Using the wild McKay correspondence, we can transform this duality into the form:
\begin{multline}
M(K_{r},\Gamma,\bv_{\tau})\cdot q^{rd}-M(K_{r},\Gamma,-\bw_{\tau})\\
=\DD(M(K_{r},\Gamma,-\bw_{\tau}))\cdot q^{rd}-\DD(M(K_{r},\Gamma,\bv_{\tau})).\label{eqin2}
\end{multline}
We call it the \emph{weak duality}
between $M(K_{r},\Gamma,\bv_{\tau})$ and $M(K_{r},\Gamma,-\bw_{\tau})$, which
is indeed weaker than the strong duality (\ref{eqintro1})

There are three possibilities; both  dualities hold, both dualities fail, and the weak one holds but the strong one does not.  
Examples we compute show that all of them indeed occur. 
In the tame case, the strong duality holds. For permutation representations, 
the weak duality holds in all examples we compute, but the strong duality does not generally hold.
When $\cO_{K}=\FF_{q}[[t]]$ and the given representation $\tau$ is defined over $\FF_q$, 
the strong duality holds in all computed examples. 
However, when the representation is not defined over $\FF_q$, then 
the weak duality tends to fail. 
These examples suggest that 
we may think of the two dualities as a test of how equisingular the family $X$ is. 

In particular, when $\cO_{K}=\FF_{q}[[t]]$ and $\tau$ is defined over $\FF_q$, the associated 
quotient scheme $X =\AA ^d_{\FF_q[[t]]}$ would be equisingular in any reasonable sense.
If there exists a nice resolution of the $\FF_{q}$-variety $X\otimes_{\cO_{K}}\FF_{q}$,
then we obtain an equally nice simultaneous resolution of $X$ just
by the base change. Therefore, if one believes that there exists as
a nice resolution in positive characteristic as in characteristic
zero, then at least the weak duality (\ref{eqin2}) would hold for such $\tau$. On
the contrary, if one finds an example such that the weak duality fails,
then it would give an example of an $\FF_{q}$-variety without admitting
any nice resolution. 
However, as far as we computed, all examples satisfy even the strong duality. 

The study in this paper thus gives rise to interesting open problems related to
dualities and equisingularities and their interaction (see Questions \ref{Q:positive_char_weak_dual}, \ref{Q:positive_char_strong_dual} and \ref{Q:permutation_dual} and Section \ref{sec:Concluding_remarks}). Further studies are required.

The paper is organized as follows. In Section \ref{sec:Admissible-functions},
we define admissible functions and their duals. In Section \ref{sec:stringy},
we discuss properties of stringy point counts. In Section \ref{sec:Total-masses},
we define total masses and discuss how the wild McKay correspondence
relate dualities of total masses with equisingularities. In Section
\ref{sec:Examples}, we exhibit several examples.
In the final section, we give concluding remarks.

\subsection{Acknowledgments}

The authors would like to thank Kiran Kedlaya for helpful discussion. 
Yasuda thanks Max Planck Institute for Mathematics for its hospitality, where he stayed
partly during this work.
Wood was supported by NSF grants DMS-1147782 and DMS-1301690 and an American Institute of Mathematics Five Year Fellowship.

\subsection{Convention and notation}

We denote the set of positive integers by $\NN$. A \emph{local field}
means a finite extension of either $\QQ_{p}$ or $\FF_{p}((t))$ for
a prime number $p$. For a local field $K$, we denote its integer
ring by $\cO_{K}$ and its residue field by $k$. We denote the characteristic
of $k$ by $p$ and the cardinality of $k$ by $q$. For $r\in\NN$,
we denote the unramified extension of $K$ of degree $r$ by $K_{r}$
and its residue field (that is, the extension of $k$ of degree $r$)
by $k_{r}$. For a variety $X$ defined over either $\cO_{K}$ or
$k$, we put $\sharp X:=\sharp X(k)$, the cardinality of $k$-points
of $X$. More generally, for $r\in\NN$, we put $\sharp^{r}X:=\sharp X(k_{r})$.

\section{Admissible functions and their duals\label{sec:Admissible-functions}}

In this section, we set foundation to discuss dualities of total masses
and stringy point counts by introducing \emph{admissible functions.} 
\begin{defn}
We call a function $f:\NN\to\CC$ \emph{admissible }if there exist
numbers $0\ne c\in\QQ$, $n_{i}\in\ZZ$ and $\alpha_{i}\in\CC$ $(1\le i\le m)$
such that for every $r\in\NN$, 
\[
f(r)=\frac{\sum_{i=1}^{l}n_{i}\alpha_{i}^{r}}{q^{cr}-1}.
\]
We denote the set of admissible functions by $\AF$. For an admissible
function $f(r)$ as above, we define its \emph{dual function }$\DD f:\NN\to\CC$
by
\[
(\DD f)(r):=\frac{\sum_{i=1}^{l}n_{i}\alpha_{i}^{-r}}{q^{-cr}-1}.
\]

\end{defn}
It is easy to see that the set of admissible functions is closed under
addition and multiplication, thus $\AF$ has a natural ring structure.
For $f\in\AF$, the dual $\DD f$ is also an admissible function.
Therefore $\DD$ gives an involution of the set $\AF$. Moreover it
is easily checked that $\DD:\AF\to\AF$ is a ring isomorphism. 

A typical admissible function is a rational function in $q^{1/n}$
for some $n\in\NN$ with a denominator of the form $q^{cr}-1$. In
that case, the dual function is obtained just by substituting $q^{-1}$
for $q$. Another example is given as follows: let $k=\FF_{q}$ and
$k_{r}:=\FF_{q^{r}}$. For a $k$-variety $X$, the function 
\[
r\mapsto\sharp^{r}X:=\sharp X(k_{r})
\]
is admissible from the Grothendieck-Lefschetz trace formula.
\begin{lem}
The dual function $\DD f$ does not depend on the choice of the expression
of $f(r)$ as $\frac{\sum_{i=1}^{l}n_{i}\alpha_{i}^{r}}{q^{cr}-1}$. \end{lem}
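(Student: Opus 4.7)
The plan is to verify that the formal duality operation $(c;(n_i,\alpha_i))\mapsto(-c;(n_i,\alpha_i^{-1}))$ descends to the set of admissible functions. I would do this by generating the equivalence relation ``representing the same $f$'' via three elementary moves on expressions and checking each preserves the formal dual.

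The three moves are: (a) combining terms with equal $\alpha_i$, dropping terms with $n_i=0$, and dropping terms with $\alpha_i=0$ (which contribute $0$ to the numerator for every $r\in\NN$); (b) for any $M\in\NN$, using $q^{Mcr}-1=(q^{cr}-1)\sum_{j=0}^{M-1}q^{cjr}$ to rewrite the expression as
\[
\frac{\sum_{j=0}^{M-1}\sum_i n_i(\alpha_i q^{cj})^r}{q^{Mcr}-1};
\]
and (c) flipping the sign of $c$ via $q^{cr}-1=-q^{cr}(q^{-cr}-1)$ to obtain
\[
\frac{\sum_i(-n_i)(\alpha_i q^{-c})^r}{q^{-cr}-1}.
\]
First I would check that each move preserves the formal dual; (a) is immediate with the convention that $\alpha_i=0$ terms are dropped before dualizing. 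For (b), the formal dual of the rewritten expression is $\frac{\sum_i n_i\alpha_i^{-r}\sum_{j=0}^{M-1}q^{-cjr}}{q^{-Mcr}-1}$, and the geometric series identity $\sum_{j=0}^{M-1}q^{-cjr}=(q^{-Mcr}-1)/(q^{-cr}-1)$ collapses this to $\frac{\sum_i n_i\alpha_i^{-r}}{q^{-cr}-1}$, the formal dual of the original. The verification for (c) is analogous, using $q^{-cr}-1=-q^{-cr}(q^{cr}-1)$.

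Second, I would show that any two representations of the same $f$ are connected by such moves. Given expressions with parameters $c_1,c_2\in\QQ\setminus\{0\}$, apply move (c) if needed to assume $c_1,c_2>0$. Since $c_1/c_2\in\QQ_{>0}$, choose positive integers $M_1,M_2$ with $M_1c_1=M_2c_2=:C$, and apply move (b) to bring both expressions over the common denominator $q^{Cr}-1$. After normalizing via (a), the numerators are exponential polynomials $\sum n_i\alpha_i^r$ and $\sum n_j'\beta_j^r$ with distinct nonzero bases that agree for every $r\in\NN$. By the linear independence of the functions $r\mapsto\alpha^r$ for distinct $\alpha\in\CC^*$ (invertibility of the Vandermonde matrix), the two sums coincide as formal expressions, and hence so do their formal duals.

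The only non-formal ingredient is this linear independence of exponentials, which is standard. The main bookkeeping step is the verification that move (b) preserves the formal dual, where a geometric series appears symmetrically on the primal and dual sides and cancels cleanly; this, together with the sign-flip move (c), is what makes the reduction to a common denominator possible without changing the output of $\DD$.
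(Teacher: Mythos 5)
Your proof is correct, and it takes a genuinely different route to well-definedness than the paper does. The paper cross-multiplies the two candidate expressions to get the single identity
\[
(q^{dr}-1)\sum_i n_i\alpha_i^{r}=(q^{cr}-1)\sum_j m_j\beta_j^{r}\qquad(r\in\NN),
\]
observes it suffices to prove the same identity with $r$ replaced by $-r$, and deduces that from a uniqueness claim for normalized exponential sums (itself from the Vandermonde determinant). You instead present ``having the same formal dual'' as an invariant of three elementary moves on expressions --- normalization, raising the denominator exponent via a geometric series, and flipping the sign of $c$ --- and show any two expressions of the same $f$ can be reduced via these moves to a common normalized form over a common denominator, then invoke the same linear-independence fact to see the normalized forms coincide. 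Both arguments bottom out on the exact same nontrivial ingredient (linear independence of $r\mapsto\alpha^r$ for distinct $\alpha\in\CC^{\ast}$), so the difference is organizational: your version is longer but makes the mechanism explicit (each move is symmetric under $r\mapsto -r$, which is precisely why the formal dual survives), while the paper's is shorter since cross-multiplication collapses the common-denominator bookkeeping into one step. One small point you handle and the paper leaves implicit: the definition formally allows $\alpha_i=0$, for which $\alpha_i^{-r}$ is undefined; your move (a), dropping such terms before dualizing, is the right convention and is worth making explicit as you do.
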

\begin{proof}
Let 
\[
\frac{\sum_{j=1}^{l'}m_{j}\beta_{j}^{r}}{q^{dr}-1}
\]
be another such expression of $f(r)$. For every $r\in\NN$, we have
\[
(q^{dr}-1)\cdot\sum_{i=1}^{l}n_{i}\alpha_{i}^{r}=(q^{cr}-1)\cdot\sum_{j=1}^{l'}m_{j}\beta_{j}^{r}.
\]
It suffices to show that for every $r\in\NN$, 
\[
(q^{-dr}-1)\cdot\sum_{i=1}^{l}n_{i}\alpha_{i}^{-r}=(q^{-cr}-1)\cdot\sum_{j=1}^{l'}m_{j}\beta_{j}^{-r}.
\]
This follows from the following claim:
\begin{claim}
Suppose that for every $r$, we have 
\[
\sum_{i=1}^{l}n_{i}\alpha_{i}^{r}=\sum_{j=1}^{l'}m_{j}\beta_{j}^{r}.
\]
Suppose also that all of $\alpha_{i}$, $n_{i}$, $\beta_{j}$ and
$m_{j}$ are nonzero, and that $\alpha_{1},\dots,\alpha_{l}$ are
distinct, and so are $\beta_{1},\dots,\beta_{l'}$. Then we have $l=l'$,
$\alpha_{i}=\beta_{i}$ and $n_{i}=m_{i}$ up to permutation. 
\end{claim}
In turn, this claim follows from the following one:
\begin{claim}
Suppose that $\alpha_{1},\dots,\alpha_{l}$ are distinct nonzero complex
numbers and that for every $r$, 
\[
\sum_{i=1}^{l}n_{i}\alpha_{i}^{r}=0.
\]
Then $ $$n_{i}=0$ for every $i$. 
\end{claim}
The last claim follows from the regularity of the Vandermonde matrix
\[
(\alpha_{i}^{r})_{1\le i\le l,\,0\le r\le l-1}.
\]
.
\end{proof}
For a smooth proper $k$-variety $X$ of pure dimension $d$, the
Poincaré duality for the $l$-adic cohomology shows that the function
$r\mapsto\sharp^{r}X$ satisfies 
\[
\sharp^{r}X=q^{dr}\cdot\DD(\sharp^{r}X).
\]

\section{Stringy point counts\label{sec:stringy}}

\subsection{Stringy point counts}

Let $K$ be a local field, that is, a finite extension of either $\QQ_{p}$
or $\FF_{p}((t))$. We denote its residue field by $k$ and its integer
ring by $\cO_{K}$. An \emph{$\cO_{K}$-variety} means an integral
separated flat $\cO_{K}$-scheme of finite type such that there exists
an open dense subscheme $U\subset X$ which is smooth over $\cO_{K}$.
For an $\cO_{K}$-variety $X$, we denote by $X_{k}$ the closed fiber
$X\otimes_{\cO_{K}}k$ and by $X_{K}$ the generic fiber $X\otimes_{\cO_{K}}K$.
For a constructible subset $C\subset X_{k}$, we let $X(\cO_{K})_{C}$
to the set of $\cO_{K}$-points $\Spec\cO_{K}\to X$ sending the closed
point  into $C$. 

Suppose now that $X$ is normal. From \cite[Definition 1.5]{MR3057950},
$X$ has a canonical sheaf $\omega_{X/\cO_{K}}$. If $d$ is the relative
dimension of $X$ over $\cO_{K}$, then $\omega_{X/\cO_{K}}$ coincides
with $\Omega_{X/\cO_{K}}^{d}=\bigwedge^{d}\Omega_{X/\cO_{K}}$ on
the $\cO_{K}$-smooth locus of $X$. The canonical divisor $K_{X}=K_{X/\cO_{K}}$
is defined as a divisor such that $\omega_{X/\cO_{K}}=\cO_{X}(K_{X})$,
which is determined up to linear equivalence. Let us suppose also
that $X$ is $\QQ$-Gorenstein, that is, $K_{X}$ is $\QQ$-Cartier.
Then, for a constructible subset $C\subset X_{k}$, we can define
the \emph{stringy point count along $C$, 
\[
\sharp_{\st}(X)_{C}\in\RR_{\ge0}\cup\{\infty\},
\]
}as a certain $p$-adic volume of $X(\cO_{K})_{C}$\emph{.} For details,
see \cite{wild-p-adic}. However, what is important for our purpose
is not the definition but the explicit formula in the next subsection.

For $r\in\NN$, let $K_{r}$ be the unramified extension of $K$ of
degree $r$ and let $k_{r}$ be its residue field. We then define
$\sharp_{\st}^{r}(X)_{C}$ to be the stringy point count $X\otimes_{\cO_{K}}\cO_{K_{r}}$
along the preimage of $C$ in $(X\otimes_{\cO_{K}}\cO_{K_{r}})\otimes_{\cO_{K_{r}}}k_{r}=X\otimes_{\cO_{K}}k_{r}$.
We often regard $\sharp_{\st}^{r}(X)_{C}$ as a function in $r$.
When $X$ is smooth over $\cO_{K}$, then $\sharp_{\st}^{r}(X)_{C}$
is equal to the number of $k_{r}$-points of $X$ contained in $C$.

\subsection{An explicit formula and the Poincaré duality}

Let $X$ be a normal $\QQ$-Gorenstein variety over either $\cO_{K}$
or $k$. For a proper birational morphism $f:Y\to X$ with $Y$ normal,
the relative canonical divisor 
\[
K_{Y/X}:=K_{Y}-f^{*}K_{X}
\]
is uniquely determined as a $\QQ$-divisor on $Y$ supported in the
exceptional locus of $f$. We are especially interested in the case where $Y$
is regular and $K_{Y/X}$ is simple normal crossing. We define simple
normal crossing divisors on regular $\cO_{K}$-varieties as follows.
\begin{defn}
Let $Y$ be a regular variety over  $\cO_{K}$ and $E\subset Y$
a reduced closed subscheme of pure codimension one. We call $E$ a
\emph{simple normal crossing divisor }if 
\begin{itemize}
\item for every irreducible component $E_{0}$ of $E$ and for every closed
point $x\in E_{0}$ where $Y$ is smooth over $\cO_{K}$, the completion
of $E_{0}$ at $x$ is irreducible, and
\item for every closed point $x\in Y$ where $Y$ is smooth over $\cO_{K}$,
the support of $E$ is, in a certain Zariski neighborhood, defined
by a product $y_{i_{1}}\dots y_{i_{m}}$ for a regular system of parameters
$y_{0}=\varpi,y_{1}\dots,y_{d}\in\hat{\cO}_{Y,y}$ with $\varpi$
a uniformizer of $K$ and $0\le i_{1}<\cdot<i_{m}\le d$. 
\end{itemize}
We then call a $\QQ$-divisor on $Y$ \emph{simple normal crossing
}if so is its support.
\end{defn}
The reason why we look only at closed points where $Y$ is $\cO_{K}$-smooth
is that there is no $\cO_{K}$-point passing through a closed point
where $Y$ is not $\cO_{K}$-smooth, and hence such a point is irrelevant
to stringy point counts.
\begin{defn}
Let $f:Y\to X$ be a proper birational morphisms of varieties over
either $\cO_{K}$ or $k$ such that $X$ is normal and $\QQ$-Gorenstein.
\begin{itemize}
\item We call $f$ a \emph{resolution }if $Y$ is regular.
\item We call $f$ a \emph{WL (weak log) resolution }if $f$ is a resolution
and the support of $K_{Y/X}$ is simple normal crossing.
\item When $f$ is defined over $\cO_{K}$, we call $f$ an \emph{SWL (simultaneous
weak log)} \emph{resolution} if $f$ is a WL resolution and $Y$ is
smooth over $\cO_{K}$ or $k$. 
\end{itemize}
\end{defn}
\begin{rem}
The word \emph{weak }indicates that we look only at the support of
$K_{Y/X}$ rather than the whole exceptional locus. In particular,
every resolution $Y\to X$ with $K_{Y/X}=0$ is a WL resolution, whatever
the exceptional locus is.
\begin{rem}
We cannot usually expect that an SWL resolution exists, unless $X$
is thought of as an \emph{equisingular }family over $\cO_{K}$ in
some sense. If $\cO_{K}=k[[t]]$ and if $X=X_{0}\otimes_{k}k[[t]]$
for some $k$-variety $X_{0}$, then $X$ would be equisingular in
any reasonable sense. In this situation, an SWL resolution of $X$
exists if and only if a WL resolution of $X_{0}$ exists. 
\end{rem}
\end{rem}
For a normal $\QQ$-Gorenstein $\cO_{K}$-variety $X$ and a WL resolution
$f:Y\to X$, we can uniquely decompose $K_{Y/X}$ as 
\begin{equation}
K_{Y/X}=\sum_{h=1}^{l}a_{h}A_{h}+\sum_{i=1}^{m}b_{i}B_{i}+\sum_{j=1}^{n}c_{j}C_{j},\label{discrep}
\end{equation}
where all the $A_{h},B_{i},C_{j}$ are prime divisors on $Y$, the
coefficients $a_{h},b_{i},c_{j}$ are nonzero rational numbers, every
$A_{h}$ is contained in $X_{k}$ and $Y$ is $\cO_{K}$-smooth at
the generic point of $A_{h}$, every $B_{i}$ is contained in $X_{k}$
and $Y$ is \emph{not }$\cO_{K}$-smooth at the generic point of $B_{i}$,
and every $C_{j}$ dominates $\Spec\cO_{K}$. We denote by $A_{h}^{\circ}$
the locus in $A_{h}$ where $Y$ is $\cO_{K}$-smooth. For a subset
$J\subset\{1,\dots,n\}$, we define
\[
C_{J}^{\circ}:=\bigcap_{j\in J}C_{j}\setminus\bigcup_{j\notin J}C_{j}.
\]
For a constructible subset $W$ of a $k$-variety $Z$, let $\sharp^{r}(W)$ denote the number of
$k_{r}$-points of $W$.

\begin{prop}[An explicit formula, \cite{wild-p-adic}]
\label{prop:explicit-formula-stringy}Let $f:Y\to X$ be a WL resolution
of a normal $\QQ$-Gorenstein $\cO_{K}$-variety $X$ and write $K_{Y/X}$
as in (\ref{discrep}). Let $Y_{\sm}$ be the $\cO_{K}$-smooth locus
of $Y$. 
\begin{enumerate}
\item \label{enu: log terminal}We have $\sharp_{\st}^{r}(X)_{C}<\infty$
for all $r\in\NN$ if and only if for every $j$ such that $C_{j}\cap f^{-1}(C)\cap Y_{\sm}\ne\emptyset$,
$c_{j}>-1$.
\item If the two equivalent conditions of (\ref{enu: log terminal}) hold,
then 
\[
\sharp_{\st}^{r}(X)_{C}=\sum_{h=1}^{l}q^{-ra_{h}}\sum_{J\subset\{1,\dots,n\}}\sharp^{r}(f^{-1}(C)\cap A_{h}^{\circ}\cap C_{J}^{\circ})\prod_{j\in J}\frac{q^{r}-1}{q^{r(1+c_{j})}-1}.
\]

\end{enumerate}
\end{prop}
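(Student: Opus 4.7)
The plan is to express $\sharp_{\st}^r(X)_C$ as a weighted $p$-adic volume of $X(\cO_{K_r})_C$, transfer this volume to the resolution $Y$ via the proper birational morphism $f$, and then evaluate the resulting integral on $Y(\cO_{K_r})$ using the local SNC description of $K_{Y/X}$.

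By the definition from \cite{wild-p-adic}, $\sharp_{\st}^r(X)_C$ is the $p$-adic volume of $X(\cO_{K_r})_C$ weighted by a volume form determined by $\omega_{X/\cO_K}$ through the $\QQ$-Gorenstein structure. Since $Y$ is normal and $f$ is proper and birational, the valuative criterion identifies $X(\cO_{K_r})$ and $Y(\cO_{K_r})$ off a subset of measure zero, and the change-of-variables formula transforms the integral into one on $Y(\cO_{K_r})$ carrying an additional Jacobian factor $q^{-r\,\ord_{\tilde y}(K_{Y/X})}$ at each arc $\tilde y$. The $\cO_{K_r}$-points of $Y$ meeting the non-$\cO_K$-smooth locus form a thin subset of measure zero for this integral, which is precisely why the vertical components $B_i$ (supported on that locus) do not appear in the final formula and the integration reduces to $Y_{\sm}(\cO_{K_r})$.

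I would then stratify $Y_{\sm}(\cO_{K_r})$ according to (i) the horizontal component $A_h$ whose smooth locus contains the closed point of the arc, and (ii) the subset $J \subset \{1,\dots,n\}$ of vertical divisors $C_j$ meeting this closed point. On each stratum the SNC condition supplies regular parameters $y_0 = \varpi, y_1,\dots, y_d$ in which $A_h$ is locally cut out by $y_0 = \varpi$ and each $C_j$ with $j \in J$ is cut out by one of $y_1,\dots,y_d$. On an arc $\tilde y$, one has $\ord(y_0(\tilde y)) = 1$ (since $K_r/K$ is unramified) and $\ord(y_{i_j}(\tilde y)) = n_j \geq 1$ is the order of contact with $C_j$, so the Jacobian factor becomes $q^{-r a_h}\prod_{j \in J} q^{-r c_j n_j}$. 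Integrating the Haar measure over the admissible valuations $n_j \geq 1$ yields the geometric series
\[
\sum_{n \geq 1}(1 - q^{-r})\,q^{-r n (1+c_j)} = \frac{1 - q^{-r}}{q^{r(1+c_j)} - 1},
\]
and combining with the Haar integral in the transverse directions, the count $\sharp^r(f^{-1}(C) \cap A_h^\circ \cap C_J^\circ)$ of $k_r$-points in the stratum, and the normalization $q^{rd}$ converting $p$-adic volume into a stringy point count produces the advertised factor $\prod_{j \in J}(q^r - 1)/(q^{r(1+c_j)} - 1)$. Summing over the strata indexed by $h$ and $J$ assembles the formula in (2).

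Part (1) then drops out: each geometric series converges iff $1 + c_j > 0$, and convergence is only needed for those $j$ for which the stratum $C_j \cap f^{-1}(C) \cap Y_{\sm}$ is non-empty. The main obstacle is not the combinatorial bookkeeping but the foundational setup: making precise the weighted $p$-adic integration on $\QQ$-Gorenstein $\cO_K$-varieties and justifying the change-of-variables formula through $f$ in the possibly wildly ramified setting (i.e., when $p$ divides the relevant inertia). Once those foundations from \cite{wild-p-adic} are imported, the computation above is essentially a direct evaluation.
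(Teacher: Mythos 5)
Your proposal is correct and follows the same route as the paper: both reduce to the change-of-variables identity $\sharp_{\st}^r(X)_C = \sharp_{\st}^r(Y, -K_{Y/X})_{f^{-1}(C)}$ from \cite{wild-p-adic} and then evaluate the stringy count of the SNC pair, with your stratification by $(A_h, J)$ and the geometric series being exactly the computation the paper cites as ``the explicit formula for the stringy point count of a pair with a simple normal crossing divisor in the same paper.'' The only cosmetic difference is that the arcs through the non-$\cO_K$-smooth locus of a regular $Y$ are in fact empty, not merely of measure zero, but this does not affect the argument.
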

\begin{proof}
In \cite{wild-p-adic} it was proved that $\sharp_{\st}(X)_{C}$ is
equal to $\sharp_{\st}(Y,-K_{Y/X})_{f^{-1}(C)}$, the stringy point
count of the pair $(Y,-K_{Y/X})$ along $f^{-1}(C)$. The proposition
follows from the explicit formula for the stringy point count of a
pair with a simple normal crossing divisor in the same paper.
\end{proof}

In particular, if $K$ has characteristic zero and if the generic fiber $X_K$ has only log terminal singularities (for instance, quotient singularities), then $\sharp^r (X)_C<\infty $  for every $r$ and $C$. 
The following is a direct consequence of the proposition. 
\begin{cor}
Suppose that $X$ is $\QQ$-Gorenstein, that there exists a WL resolution
$f:Y\to X$, and that for every $r>0$, $\sharp_{\st}^{r}(X)_{C}<\infty$.
Then the function $\sharp_{\st}^{r}(X)_{C}$ in the variable $r$
is admissible.
\end{cor}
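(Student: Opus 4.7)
The plan is to apply the preceding explicit formula directly, then reduce to the ring structure on $\AF$ together with basic admissibility facts already established in the paper.

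First, since $\sharp_{\st}^{r}(X)_{C}<\infty$ for every $r\in\NN$ by hypothesis, the equivalent conditions of part (1) of Proposition \ref{prop:explicit-formula-stringy} hold, so part (2) applies and yields
\[
\sharp_{\st}^{r}(X)_{C}=\sum_{h=1}^{l}q^{-ra_{h}}\sum_{J\subset\{1,\dots,n\}}\sharp^{r}(W_{h,J})\prod_{j\in J}\frac{q^{r}-1}{q^{r(1+c_{j})}-1},
\]
where $W_{h,J}:=f^{-1}(C)\cap A_{h}^{\circ}\cap C_{J}^{\circ}$ is a constructible subset of a $k$-variety. Since this is a finite sum of finite products, it suffices to show each factor is admissible and then invoke closure of $\AF$ under sums and products.

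Next I would check each of the three types of factors. The scalar factor $q^{-ra_{h}}$, with $a_{h}\in\QQ$, may be rewritten for any fixed $c\in\QQ\setminus\{0\}$ as
\[
q^{-ra_{h}}=\frac{q^{r(c-a_{h})}-q^{-ra_{h}}}{q^{cr}-1},
\]
placing it in admissible form. Each rational factor $\frac{q^{r}-1}{q^{r(1+c_{j})}-1}$ is already in the required shape with $c:=1+c_{j}$, and the hypothesis that the stringy point count is finite forces $c_{j}>-1$ for all relevant $j$ by part (1) of Proposition \ref{prop:explicit-formula-stringy}, so this $c$ is a nonzero rational number. For the counting factor $\sharp^{r}(W_{h,J})$, the Grothendieck--Lefschetz trace formula gives
\[
\sharp^{r}W_{h,J}=\sum_{i}(-1)^{i}\Tr\bigl(F^{r}\mid H_{c}^{i}(W_{h,J},\QQ_{\ell})\bigr),
\]
which is a $\ZZ$-linear combination of terms of the form $\alpha^{r}$; clearing by $q^{cr}-1$ as above exhibits it as admissible, as was noted after the definition of $\AF$.

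Having established admissibility of every building block, the corollary follows from the fact, observed immediately after the definition of admissible functions, that $\AF$ is a ring. The only delicate point in the argument is that closure under products in $\AF$ requires reconciling the different denominator exponents appearing in the factors, but this is precisely what the ring structure on $\AF$ already supplies and does not need to be revisited here. Thus I do not anticipate a serious obstacle beyond careful bookkeeping of the exponents $a_{h}$, $c_{j}$, and the Frobenius eigenvalues on the $H_{c}^{i}(W_{h,J},\QQ_{\ell})$.
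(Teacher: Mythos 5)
Your proof is correct and is essentially the argument the paper intends when it states that the corollary is a direct consequence of Proposition~\ref{prop:explicit-formula-stringy}: plug into the explicit formula, check each factor is admissible (the $q^{-ra_h}$ terms, the rational factors $\frac{q^{r}-1}{q^{r(1+c_j)}-1}$ with $1+c_j\neq 0$ guaranteed by finiteness via part~(1), and $\sharp^{r}(W_{h,J})$ via Grothendieck--Lefschetz as already noted after the definition of $\AF$), and invoke closure of $\AF$ under sums and products. The only small point you leave implicit, which is harmless, is that $W_{h,J}$ is a constructible set rather than a variety; one either applies Grothendieck--Lefschetz in the constructible setting or stratifies into locally closed pieces and uses additivity of $\AF$.
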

The following result was proved by Batyrev--Dais \cite{MR1404917} and Batyrev \cite{MR1672108} in a
slightly different setting. 
\begin{cor}[The Poincaré duality]
\label{cor:Poincare dual}Let $X$ be a $d$-dimensional proper normal
$\QQ$-Gorenstein $\cO_{K}$-variety. Suppose that there exists an
SWL resolution $f:Y\to X$ and that for every $r\in\NN$, $\sharp_{\st}^{r}(X)<\infty$.
We then have 
\[
\sharp_{\st}^{r}(X)=\DD(\sharp_{\st}^{r}(X))\cdot q^{dr}.
\]
\end{cor}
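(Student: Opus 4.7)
The plan is to adapt the Batyrev--Dais proof of Poincaré duality for stringy $E$-functions \cite{MR1404917,MR1672108} to the $p$-adic setting. I begin by applying Proposition~\ref{prop:explicit-formula-stringy} to the SWL resolution $f\colon Y\to X$. Since $Y$ is $\cO_K$-smooth, the ``$B_i$'' components in the decomposition of $K_{Y/X}$ do not occur and $A_h^{\circ}=A_h$; moreover, $Y_k$ is smooth and proper over $k$ of pure dimension $d$, and by the SNC condition each closed intersection $A_h\cap C_{J'}:=A_h\cap\bigcap_{j\in J'}C_j$ is a smooth proper subvariety of $Y_k$ of pure dimension $d-|J'|$.

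Next I would rewrite each locally closed point count by Möbius inversion, $\sharp^{r}(A_h\cap C_J^{\circ})=\sum_{J'\supset J}(-1)^{|J'\setminus J|}\sharp^{r}(A_h\cap C_{J'})$, substitute into the explicit formula, and swap the $J$-- and $J'$--summations. The inner alternating sum collapses via the elementary identity $\sum_{J\subset J'}(-1)^{|J'\setminus J|}\prod_{j\in J}x_j=\prod_{j\in J'}(x_j-1)$ applied to $x_j=\frac{q^r-1}{q^{r(1+c_j)}-1}$, producing a reformulation of $\sharp_{\st}^{r}(X)$ as a sum over pairs $(h,J')$ involving only the point counts of the smooth proper closed strata.

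Then I would apply $\DD$ termwise, using that $\DD$ is a ring homomorphism on $\AF$. The key inputs are $\DD(q^{\pm r\alpha})=q^{\mp r\alpha}$, a direct computation of $\DD$ on the rational factors (which introduces an explicit $q^{rc_j}$-twist), and the Poincaré duality for smooth proper $k$-varieties noted at the end of Section~\ref{sec:Admissible-functions}: $\DD(\sharp^{r}(A_h\cap C_{J'}))=q^{-r(d-|J'|)}\sharp^{r}(A_h\cap C_{J'})$. After the $q^{\pm r|J'|}$ factors cancel cleanly, the identity $q^{dr}\DD(\sharp_{\st}^{r}(X))=\sharp_{\st}^{r}(X)$ falls out.

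The main technical obstacle is the careful bookkeeping of the vertical discrepancies $a_h$ attached to components of $Y_k$: a naive termwise comparison would produce $q^{ra_h}$ on one side and $q^{-ra_h}$ on the other, so the duality closes only if these vertical contributions match. This is precisely where the SWL hypothesis enters essentially, expressing a form of equisingularity for the family $X/\Spec\cO_K$; in the classical Batyrev--Dais setting the issue is invisible because there is only a single ``absolute'' fiber. Making the vertical cancellation precise---either by checking that the $a_h$ vanish identically under SWL or by identifying $\sharp_{\st}^{r}(X)$ with a stringy count of $X_k$ to which Batyrev--Dais applies directly---is the heart of the argument.
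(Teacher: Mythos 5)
Your outline follows the paper's own argument quite closely in structure, but the step you flag at the end as ``the heart of the argument'' --- why the vertical discrepancies $a_h$ disappear --- is not a loose end to be tidied up later; it is exactly what the SWL hypothesis is for, and you have left it unresolved. The paper's proof opens by observing that for an SWL resolution the terms $\sum a_h A_h$ and $\sum b_i B_i$ in the decomposition (\ref{discrep}) of $K_{Y/X}$ simply do not occur. You correctly dispose of the $B_i$ (no non-smooth locus), but you carry the $A_h$ along and then discover, at the end, that their contributions $q^{\pm r a_h}$ will not cancel under $\DD$. That is a genuine gap, not a bookkeeping nuisance: without $a_h=0$ the claimed equality is false in general.

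The reason the $A_h$ vanish is this. Since $Y$ is $\cO_K$-smooth, $Y_k$ is a smooth $k$-variety. Since $X$ is an integral, normal, proper, flat $\cO_K$-scheme, its special fiber $X_k$ is connected; and since $f$ is a proper birational morphism onto a normal scheme it has connected fibers, so $Y_k$ is connected, hence irreducible. Thus the only prime divisor of $Y$ contained in $Y_k$ is $Y_k$ itself, and it dominates $X_k$, so it is not $f$-exceptional. As $\mathrm{Supp}\,K_{Y/X}$ lies in the exceptional locus, no $A_h$ term appears. Once the vertical terms are gone the explicit formula in Proposition \ref{prop:explicit-formula-stringy} reduces to a sum over subsets $J\subset\{1,\dots,n\}$ of horizontal components $C_j$, and the inclusion--exclusion rewriting together with the Poincar\'e duality for the smooth proper $k$-varieties $C_J\otimes_{\cO_K}k$ of dimension $d-\sharp J$ closes the argument, exactly as you sketch. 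Your alternative route --- reducing to a Batyrev--Dais statement purely about $X_k$ --- is not what the paper does and is not immediate, since $\sharp_{\st}^{r}(X)$ is a $p$-adic volume on $X(\cO_{K_r})$, not an invariant of the special fiber alone; the SWL resolution is precisely the tool that lets one compute it through a single $\cO_K$-smooth model.
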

\begin{proof}
We follow arguments in the proof of \cite[Theorem 3.7]{MR1672108}.
Since $f$ is an SWL resolution, if we write $K_{Y/X}$ as in (\ref{discrep}),
then the terms $\sum a_{h}A_{h}$ and $\sum b_{i}B_{i}$ do not appear.
Therefore Proposition \ref{prop:explicit-formula-stringy}$ $ reads
\[
\sharp_{\st}^{r}(X)=\sum_{J\subset\{1,\dots,n\}}\sharp^{r}(C_{J}^{\circ})\prod_{j\in J}\frac{q^{r}-1}{q^{r(1+c_{j})}-1}.
\]
Putting 
\[
C_{J}:=\bigcap_{j\in J}C_{j},
\]
we have $C_{J}:=\bigsqcup_{J'\supset J}C_{J'}^{\circ}$ and $\sharp^{r}(C_{J})=\sum_{J'\supset J}\sharp^{r}(C_{J'}^{\circ}).$
From this and the inclusion-exclusion principle (or the Möbius inversion
formula, for instance, see \cite{MR1442260}), we deduce
\[
\sharp^{r}(C_{J}^{\circ})=\sum_{J'\supset J}(-1)^{\sharp J'-\sharp J}\sharp^{r}(C_{J'}).
\]
Hence
\begin{align*}
\sharp_{\st}^{r}(X) & =\sum_{J\subset\{1,\dots,n\}}\left(\sum_{J'\supset J}(-1)^{\sharp J'-\sharp J}\sharp^{r}(C_{J'})\right)\prod_{j\in J}\frac{q^{r}-1}{q^{r(1+c_{j})}-1}\\
 & =\sum_{J\subset\{1,\dots,n\}}\sharp^{r}(C_{J})\prod_{j\in J}\left(\frac{q^{r}-1}{q^{r(1+c_{j})}-1}-1\right)\\
 & =\sum_{J\subset\{1,\dots,n\}}\sharp^{r}(C_{J})\prod_{j\in J}\left(\frac{q^{r}-q^{r(1+c_{j})}}{q^{r(1+c_{j})}-1}\right).
\end{align*}
Now we have 
\[
\DD\left(\frac{q^{r}-q^{r(1+c_{j})}}{q^{r(1+c_{j})}-1}\right)=\frac{q^{-r}-q^{r(-1-c_{j})}}{q^{r(-1-c_{j})}-1}=\frac{q^{r}-q^{r(1+c_{j})}}{q^{r(1+c_{j})}-1}\cdot q^{-r}.
\]
Since $\sharp^{r}(C_{J})=\sharp^{r}(C_{J}\otimes_{K}k)$ and $C_{J}\otimes_{K}k$
are proper smooth $k$-varieties of pure dimension $d-\sharp J$,
the Poincaré duality for the $l$-adic cohomology gives
\[
\DD(\sharp^{r}(C_{J}))=\sharp^{r}(C_{J})\cdot q^{r(\sharp J-d)}.
\]
Since $\DD:\AF\to\AF$ is a ring homomorphism, the corollary follows.
\end{proof}

\subsection{Set-theoretically free $\GG_{m}$-actions\label{sub: free-actions}}
\begin{defn}
An action of the group scheme $\GG_{m}=\GG_{m,k}=\Spec k[t,t^{-1}]$
on a $k$-variety $Z$ is said to be \emph{set-theoretically free
}if for every field extension $k'/k$ and every point $x\in Z(k')$,
the stabilizer subgroup scheme $\Stab(x)\subset\GG_{m,k'}$ consists
of a single point.
\end{defn}
A set-theoretically free $\GG_{m}$-action naturally appears as the
$\GG_{m}$-action on the quotient variety $(\AA_{k}^{d}/\Gamma)\setminus\{o\}$
with the origin removed for a linear action of a finite group $\Gamma$
on an affine space $\AA_{k}^{d}$. In general, the $\GG_{m}$-action may have
non-reduced stabilizer subgroups and not be free. See \cite{MR3230848}
for such an example. If $X$ is the quotient variety $Z/\GG_{m}$
for a set-theoretically free $\GG_{m}$-action on a smooth variety
$Z$ and if $X$ is proper, then the function $\sharp^{r}X$ satisfies
the same Poincaré duality as a smooth proper variety does. 
Note that $X$ is not necessarily smooth because of non-reduced stabilizers. 
To prove the duality, we need to use Artin stacks. 

Based on works of Laszlo and Olsson \cite{Olsson:2008tv,MR2434692,MR2434693},
Sun \cite{MR2950161} defined étale cohomology groups $H^{i}(\cX\otimes_{k}\overline{k},\overline{\QQ}_{l})$
and $H_{c}^{i}(\cX\otimes_{k}\overline{k},\overline{\QQ}_{l})$ for
Artin stacks $\cX$ of finite type over $k$ and a prime number $l\ne p$.
For $r\in\NN$, we define 
\[
\sharp^{r}(\cX):=\sum_{i}(-1)^{i}\mathrm{Tr}(F^r | H_{c}^{i}(\cX\otimes_{k}\bar{k},\overline{\QQ}_{l}))
\]
with $F^r$  the $r$-iterated Frobenius action. 
This is a generalization of $\sharp^{r}X=\sharp X(k_{r})$ for a $k$-variety
$X$.
\begin{prop}
\label{prop: Artin et char}Let $\cX$ be an Artin stack of finite type over $k$
with finite diagonal, and $\overline{\cX}$ its coarse moduli space.
Then we have
\[
\sharp^{r}(\cX)=\sharp^{r}(\overline{\cX}).
\]
Moreover, if $\cX$ is smooth and proper of pure dimension $d$ over
$k$, then
\[
\sharp^{r}(\cX)=\DD(\sharp^{r}(\cX))\cdot q^{rd}.
\]
\end{prop}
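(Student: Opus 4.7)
The plan is to treat the two assertions separately, using the $l$-adic formalism for Artin stacks developed by Laszlo--Olsson and the trace-formula framework established by Sun. For the first assertion, I would work with the coarse moduli map $\pi\colon\cX\to\overline{\cX}$, which exists by Keel--Mori (finite diagonal yields finite inertia) and is proper with geometric fibers being the residual gerbes $B\Gamma_{x}$ attached to the finite automorphism group schemes $\Gamma_{x}$. The key reduction is to prove $R\pi_{!}\overline{\QQ}_{l}\simeq\overline{\QQ}_{l}$: by proper base change for Artin stacks, this amounts to showing that $H_{c}^{i}(B\Gamma_{x}\otimes_{k}\overline{k},\overline{\QQ}_{l})$ equals $\overline{\QQ}_{l}$ for $i=0$ and vanishes otherwise, which holds because $l$-adic cohomology of a finite group scheme is concentrated in degree zero (the averaging idempotent splits off trivial cohomology). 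Taking alternating Frobenius traces of the resulting isomorphism $H_{c}^{\bullet}(\cX\otimes_{k}\overline{k},\overline{\QQ}_{l})\cong H_{c}^{\bullet}(\overline{\cX}\otimes_{k}\overline{k},\overline{\QQ}_{l})$ of Galois modules then yields $\sharp^{r}(\cX)=\sharp^{r}(\overline{\cX})$.

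For the second assertion, I would combine Poincaré duality for smooth proper Artin stacks (in the form due to Laszlo--Olsson and employed by Sun) with a direct manipulation of $\DD$. Duality gives
\[
H_{c}^{i}(\cX\otimes_{k}\overline{k},\overline{\QQ}_{l})\cong H_{c}^{2d-i}(\cX\otimes_{k}\overline{k},\overline{\QQ}_{l})^{\vee}(-d),
\]
so if $\{\alpha_{i,j}\}_{j}$ denote the Frobenius eigenvalues on $H_{c}^{i}$, then $\{q^{d}\alpha_{i,j}^{-1}\}_{j}$ are the eigenvalues on $H_{c}^{2d-i}$. Writing $\sharp^{r}(\cX)=\sum_{i}(-1)^{i}\sum_{j}\alpha_{i,j}^{r}$ and recalling that $\DD$ inverts each $\alpha_{i,j}$ (as one sees by putting $\sharp^{r}(\cX)$ into admissible form with any choice of denominator $q^{cr}-1$), I compute
\[
q^{rd}\cdot\DD(\sharp^{r}(\cX))=\sum_{i}(-1)^{i}\sum_{j}(q^{d}\alpha_{i,j}^{-1})^{r}=\sum_{i'}(-1)^{2d-i'}\sum_{j'}\alpha_{i',j'}^{r}=\sharp^{r}(\cX),
\]
where the middle equality uses the duality-induced eigenvalue correspondence with the reindex $i'=2d-i$, and the final one uses $(-1)^{2d-i'}=(-1)^{i'}$.

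The main technical obstacle lies not in the formal computations above but in securing the two stacky inputs: proper base change for $R\pi_{!}$ with $\overline{\QQ}_{l}$-coefficients in the Artin-stack setting, and Poincaré duality with the correct Tate twist $(-d)$ and cohomological shift $2d$ for smooth proper Artin stacks of pure dimension $d$. Care is required to confirm that finite diagonal together with smoothness and properness of pure dimension $d$ truly fits the framework of Laszlo--Olsson and Sun, so that the dualizing complex trivializes as the expected shifted twisted sheaf $\overline{\QQ}_{l}[2d](d)$; once this is in hand, both assertions follow.
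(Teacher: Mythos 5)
Your proof is correct and takes essentially the same route as the paper: the paper simply cites Sun's Proposition 7.3.2 for the cohomological identification with the coarse space and for Poincar\'e duality, and declares the proposition ``a direct consequence.'' You reconstruct Sun's first input via proper base change along the coarse moduli map and then supply the Frobenius-eigenvalue manipulation (using that $\DD$ inverts eigenvalues and that $(-1)^{2d-i}=(-1)^{i}$) that the paper leaves implicit; this is a faithful expansion of the same argument, not a different one.
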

\begin{proof}
In the proof of \cite[Proposition 7.3.2]{MR2950161}, Sun proved that
\[
H_{c}^{i}(\cX\otimes_{k}\bar{k},\overline{\QQ}_{l})=H_{c}^{i}(\overline{\cX}\otimes_{k}\bar{k},\overline{\QQ}_{l})
\]
and that if $\cX$ is smooth and proper, then we have the Poincaré
duality,
\[
H^{i}(\cX\otimes_{k}\bar{k},\overline{\QQ}_{l})^{\vee}=H^{2d-i}(\cX\otimes_{k}\bar{k},\overline{\QQ}_{l})\otimes\overline{\QQ}_{l}(d).
\]
The proposition is a direct consequence of these results.
\end{proof}
Let $Z$ be a $k$-variety endowed with a set-theoretically free $\GG_{m}$-action.
Then the quotient stack $\cW:=[Z/\GG_{m}]$ is an Artin stack with
finite diagonal and hence admits a coarse moduli space, which we write
as $W=Z/\GG_{m}$.
\begin{prop}
We have 
\[
\sharp^{r}(\cW)=\sharp^{r}(W)=\frac{\sharp^{r}(Z)}{q^{r}-1}.
\]
\end{prop}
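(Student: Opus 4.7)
The plan is to establish the two equalities separately. For the first, $\sharp^{r}(\cW) = \sharp^{r}(W)$, I would appeal directly to Proposition \ref{prop: Artin et char}, so the task reduces to verifying that $\cW = [Z/\GG_m]$ has finite diagonal. The fibers of $\Delta_{\cW}$ are the stabilizer group subschemes of the $\GG_m$-action, and by the set-theoretic freeness hypothesis each such stabilizer is supported at a single point of $\GG_m$; any closed subscheme of $\GG_m$ supported at one point is finite over $k$, so the diagonal is quasi-finite and proper, hence finite. Thus Proposition \ref{prop: Artin et char} applies and reduces the problem to proving $\sharp^{r}(W) = \sharp^{r}(Z)/(q^{r}-1)$.

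For the second equality, the plan is to count $k_{r}$-points of $W$ by Galois descent. First, I would check that the action of $\GG_m(k_{r}) = k_{r}^{\times}$ on $Z(k_{r})$ is free: for any $z \in Z(k_{r})$, the stabilizer $\Stab(z)$ is a $k$-subgroup scheme of $\GG_m$ supported at the identity, and such a subscheme has only the identity as a $k_{r}$-rational point, since in characteristic $p$ one has $\mu_{p^{n}}(k_{r}) = 1$ (as $k_{r}^{\times}$ contains no nontrivial $p^{n}$-th root of unity). Hence every $\GG_m(k_{r})$-orbit in $Z(k_{r})$ has exactly $q^{r} - 1$ elements. Next, I would identify $W(k_{r})$ with the set of such orbits: a $k_{r}$-point of $W$ corresponds to a Galois-stable geometric $\GG_m$-orbit in $Z(\bar{k})$, which is a $k_{r}$-form of the trivial $\GG_m$-torsor and hence trivial by Hilbert 90, so the orbit carries a $k_{r}$-rational point of $Z$. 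This yields a bijection $Z(k_{r}) / \GG_m(k_{r}) \xrightarrow{\sim} W(k_{r})$, so $\sharp Z(k_{r}) = (q^{r} - 1) \cdot \sharp W(k_{r})$, and combining with the trace-formula identification $\sharp^{r}(W) = \sharp W(k_{r})$ completes the proof.

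The main obstacle is making the identification $Z(k_{r}) / \GG_m(k_{r}) \cong W(k_{r})$ fully rigorous: it requires knowing that the coarse moduli formation $\cW \to W$ interacts correctly with Galois descent to $k_{r}$, and this is exactly where Hilbert 90 enters, trivializing the $\GG_m$-torsor associated to each $k_{r}$-point of $W$ so that every Galois-invariant $\bar{k}$-orbit contains an actual $k_{r}$-rational point of $Z$ rather than merely a geometric one. Once this descent is in place, the counting is immediate.
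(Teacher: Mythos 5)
Your proof of the first equality is essentially the same as the paper's: both reduce to Proposition \ref{prop: Artin et char}, and your check that $[Z/\GG_m]$ has finite diagonal (from set-theoretic freeness) is the same observation the paper makes just before stating the proposition. For the second equality, your route is genuinely different. The paper stratifies $Z$ by stabilizer type: it takes the open dense locus $U$ where the stabilizer is the minimal $H=\mu_{p^a}$, observes that $\GG_m/H$ (which is isomorphic to $\GG_m$) acts freely on $U$ so that $U\to\overline{U}$ is a $\GG_m$-torsor, applies Hilbert 90 to get $\sharp^r(U)=(q^r-1)\sharp^r(\overline U)$, and finishes by Noetherian induction on the complement. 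You instead argue at the level of $k_r$-points: the $\GG_m(k_r)$-action on $Z(k_r)$ is free because $\mu_{p^a}(k_r)=\{1\}$, and the map to $W(k_r)$ is surjective because each Galois-stable geometric orbit is a torsor that is trivial by Hilbert 90. Your approach avoids the stratification and induction at the cost of working through some descent bookkeeping (identifying $W(k_r)$ with Galois-stable orbits, descending the orbit to a $k_r$-subscheme); both ultimately rest on Hilbert 90.

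There is one point you should tighten. You describe a Galois-stable orbit as ``a $k_r$-form of the trivial $\GG_m$-torsor,'' but it is not a $\GG_m$-torsor: the original $\GG_m$-action on the orbit has (possibly non-reduced) stabilizer $\mu_{p^a}$ at every point, which is exactly the set-theoretic freeness hypothesis but not scheme-theoretic freeness. The orbit is a torsor under the quotient group scheme $\GG_m/\mu_{p^a}$, and one must observe separately that $\GG_m/\mu_{p^a}\cong\GG_m$ (via the $p^a$-th power map) before invoking Hilbert 90. This is precisely the role of the group scheme $H$ in the paper's proof, and the paper makes the isomorphism $\GG_m/H\cong\GG_m$ explicit. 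Without noting it, the invocation of Hilbert 90 is not literally justified, even though the conclusion is correct. Once you add this remark, and spell out that injectivity of $Z(k_r)/\GG_m(k_r)\to W(k_r)$ follows from the fact that the unique $t\in\bar{k}^\times$ carrying $z$ to $z'$ must be Galois-invariant (because the geometric stabilizer has only the trivial $\bar{k}$-point), the argument is complete.
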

\begin{proof}
The left equality was proved in the last proposition. We show the
right equality. For every field extension $k'/k$ and every point
$x\in Z(k')$, we have 
\[
\Stab(x)=\Spec k'[t,t^{-1}]/(t^{p^{a}}-1)
\]
for some non-negative integer $a$. Let $U\subset Z$ be the locus
where this number $a$ takes the minimum value, and let $\overline{U}\subset W$
be its image. The algebraic spaces $U$ and $\overline{U}$ are open
dense subspaces of $Z$ and $W$ respectively. If we put 
\[
H:=\Spec k[t,t^{-1}]/(t^{p^{a}}-1),
\]
then the given set-theoretically free $\GG_{m}$-action on $U$ induces
a free action of the quotient group scheme $\GG_{m}/H$, which is
isomorphic to $\GG_{m}$, on $U$. This action makes the projection
$U\to\overline{U}$ a $\GG_{m}$-torsor. From Hilbert's Theorem 90
\cite[page 124]{MR559531}, every $\GG_{m}$-torsor is Zariski locally
trivial, hence 
\[
\sharp^{r}(U)=(q^{r}-1)\cdot\sharp^{r}(\overline{U}).
\]
It is now easy to show $\sharp^{r}(Z)=(q^{r}-1)\cdot\sharp^{r}(W)$
by induction. \end{proof}
\begin{defn}
Let $X$ be an $\cO_{K}$-variety with a $\GG_{m,\cO_{K}}$-action.
We suppose that the induced $\GG_{m,k}$-action on $X_{k}$ is set-theoretically
free. For the quotient stack $[X/\GG_{m,\cO_{K}}]$, we put
\[
\sharp_{\st}^{r}([X/\GG_{m,\cO_{K}}]):=\frac{\sharp_{\st}^{r}(X)}{q^{r}-1}.
\]

\begin{defn}
\label{def: ESWL res}For a normal $\QQ$-Gorenstein $\cO_{K}$-variety
$X$ endowed with a $\GG_{m,\cO_{K}}$-action, we call a resolution
$f:Y\to X$ an \emph{ESWL (equivariant simultaneous weak log) resolution}
if $f$ is an SWL resolution and $\GG_{m,\cO_{K}}$-equivariant (that
is, $Y$ also has a $\GG_{m,\cO_{K}}$-action so that $f$ is $\GG_{m,\cO_{K}}$-equivariant).
\end{defn}
\end{defn}
\begin{prop}
\label{prop:DualPX}Let $X$ be a normal $\QQ$-Gorenstein $\cO_{K}$-variety
$X$ endowed with a $\GG_{m,\cO_{K}}$-action. Suppose that the induced
action of $\GG_{m,k}$ on $X_{k}$ is set-theoretically free and the
quotient stack $[X_{k}/\GG_{m,k}]$ is proper over $k$. Suppose also
that there exists an ESWL resolution $f:Y\to X$, and that $\sharp_{\st}^{r}(X)<\infty$
for all $r$. Then 
\[
\sharp_{\st}^{r}([X/\GG_{m,\cO_{K}}])=\DD(\sharp_{\st}^{r}([X/\GG_{m,\cO_{K}}]))\cdot q^{r(d-1)}
\]
with $d$ the dimension of $X$ over $\cO_{K}$.\end{prop}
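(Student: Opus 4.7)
The strategy is to run the proof of Corollary \ref{cor:Poincare dual} in the stacky setting: divide through by $q^{r}-1$ at every stage to pass from varieties to $\GG_{m,k}$-quotient stacks, and replace the classical Poincar\'e duality for smooth proper $k$-varieties by its analogue for smooth proper Artin stacks provided by Proposition \ref{prop: Artin et char}.

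First I would carry out the geometric preparation. Because $\GG_{m,\cO_{K}}$ is connected and $K_{Y/X}$ is canonically determined by the equivariant morphism $f$, the action preserves each prime component $C_{j}$ of $K_{Y/X}$, so all intersections $C_{J}=\bigcap_{j\in J}C_{j}$ and open strata $C_{J}^{\circ}$ are $\GG_{m,\cO_{K}}$-stable, as are the $k$-varieties $C_{J}\otimes_{\cO_{K}}k$. Next, for any field extension $k'/k$ and any $y\in(C_{J}\otimes_{\cO_{K}}k)(k')$, the stabilizer of $y$ in $\GG_{m,k'}$ sits inside the stabilizer of $f_{k}(y)\in X_{k}(k')$, which consists of a single point by hypothesis; hence the induced $\GG_{m,k}$-action on $C_{J}\otimes_{\cO_{K}}k$ is set-theoretically free. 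The stack morphism $[Y_{k}/\GG_{m,k}]\to[X_{k}/\GG_{m,k}]$ is representable and proper (base change along the atlas $X_{k}\to[X_{k}/\GG_{m,k}]$ recovers the proper morphism $f_{k}$), so $[Y_{k}/\GG_{m,k}]$ is proper over $k$ and each $[(C_{J}\otimes_{\cO_{K}}k)/\GG_{m,k}]$ is proper as a closed substack. Finally, $[(C_{J}\otimes_{\cO_{K}}k)/\GG_{m,k}]$ is smooth of pure dimension $d-1-\sharp J$, since $C_{J}\otimes_{\cO_{K}}k$ is a transverse intersection of components of a simple normal crossing divisor in the $\cO_{K}$-smooth $Y$.

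The main calculation then mirrors the proof of Corollary \ref{cor:Poincare dual}. Since $f$ is SWL, the decomposition (\ref{discrep}) reduces to $K_{Y/X}=\sum_{j=1}^{n}c_{j}C_{j}$, and the same inclusion--exclusion manipulation produces
\[
\sharp_{\st}^{r}(X)=\sum_{J\subset\{1,\dots,n\}}\sharp^{r}(C_{J})\prod_{j\in J}\frac{q^{r}-q^{r(1+c_{j})}}{q^{r(1+c_{j})}-1}.
\]
Dividing by $q^{r}-1$ and using the proposition just before Definition \ref{def: ESWL res} to rewrite $\sharp^{r}(C_{J})/(q^{r}-1)=\sharp^{r}([(C_{J}\otimes_{\cO_{K}}k)/\GG_{m,k}])$ gives
\[
\sharp_{\st}^{r}([X/\GG_{m,\cO_{K}}])=\sum_{J}\sharp^{r}([(C_{J}\otimes_{\cO_{K}}k)/\GG_{m,k}])\prod_{j\in J}\frac{q^{r}-q^{r(1+c_{j})}}{q^{r(1+c_{j})}-1}.
\]
Applying $\DD$ term by term, using the identity $\DD\bigl((q^{r}-q^{r(1+c_{j})})/(q^{r(1+c_{j})}-1)\bigr)=\bigl((q^{r}-q^{r(1+c_{j})})/(q^{r(1+c_{j})}-1)\bigr)\cdot q^{-r}$ already recorded in the proof of Corollary \ref{cor:Poincare dual} together with the stacky Poincar\'e duality $\DD(\sharp^{r}([(C_{J}\otimes_{\cO_{K}}k)/\GG_{m,k}]))=\sharp^{r}([(C_{J}\otimes_{\cO_{K}}k)/\GG_{m,k}])\cdot q^{-r(d-1-\sharp J)}$ from Proposition \ref{prop: Artin et char}, the factors $q^{-r}$ contributed by each $j\in J$ combine with $q^{-r(d-1-\sharp J)}$ to give the overall multiplier $q^{-r(d-1)}$, independent of $J$. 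Therefore $\DD(\sharp_{\st}^{r}([X/\GG_{m,\cO_{K}}]))=q^{-r(d-1)}\,\sharp_{\st}^{r}([X/\GG_{m,\cO_{K}}])$, which rearranges into the claimed duality.

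The main obstacle I expect lies in the geometric bookkeeping of the second paragraph: verifying that the strata $C_{J}\otimes_{\cO_{K}}k$ descend to smooth proper Artin stacks of the correct dimension with set-theoretically free $\GG_{m,k}$-action. Once these points are settled, the remainder of the argument is a formal transcription of Corollary \ref{cor:Poincare dual}, with every $\sharp^{r}(C_{J})$ replaced by $\sharp^{r}(C_{J})/(q^{r}-1)$ and the classical Poincar\'e duality replaced by its stacky analogue Proposition \ref{prop: Artin et char}.
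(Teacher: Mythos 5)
Your proposal is correct and follows essentially the same route as the paper: apply the explicit formula plus the inclusion--exclusion manipulation from Corollary~\ref{cor:Poincare dual}, divide by $q^{r}-1$, rewrite $\sharp^{r}(C_{J})/(q^{r}-1)$ as $\sharp^{r}$ of the quotient stack, and invoke the stacky Poincar\'e duality of Proposition~\ref{prop: Artin et char}. The geometric bookkeeping you supply in the second paragraph (stability of the $C_{J}$, set-theoretic freeness of the induced action, smoothness, properness, and dimension of the quotient stacks) is exactly the content the paper asserts more tersely in its own proof.
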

\begin{proof}
From Lemma \ref{prop:explicit-formula-stringy}, if we write $K_{Y/X}=\sum_{j=1}^{n}c_{j}C_{j}$,
then we obtain 
\begin{align*}
\sharp_{\st}^{r}([X/\GG_{m,\cO_{K}}]) & =\sum_{J\subset\{1,\dots,n\}}\frac{\sharp^{r}(C_{J}^{\circ})}{q^{r}-1}\prod_{i\in J}\frac{q^{r}-1}{q^{r(1+c_{j})}-1}\\
 & =\sum_{J\subset\{1,\dots,n\}}\frac{\sharp^{r}(C_{J})}{q^r-1}\prod_{j\in J}\left(\frac{q^{r}-q^{r(1+c_{j})}}{q^{r(1+c_{j})}-1}\right)
\end{align*}
in the same way as in the proof of Corollary \ref{cor:Poincare dual}.
Each $C_{J}\cap Y_{k}$ is stable under the $\GG_{m,k}$-action and
the action on $C_{J}\cap Y_{k}$ is set-theoretically free. Hence
$[(C_{J}\cap Y_{k})/\GG_{m,k}]$ is a smooth proper Artin $k$-stack
of dimension $d-1-\sharp J$ with finite diagonal. From Proposition
\ref{prop: Artin et char}, 
\[
\frac{\sharp^{r}(C_{J})}{q^{r}-1}=\DD\left(\frac{\sharp^{r}(C_{J})}{q^{r}-1}\right)\cdot q^{r(d-1-\sharp J)}.
\]
We can show the rest of the proof as in the proof of Corollary \ref{cor:Poincare dual}.
\end{proof}

\section{Total masses of local Galois representations\label{sec:Total-masses}}

\subsection{Total masses}

Let $K$ be a local field and $G_{K}=\Gal(K^{\sep}/K)$ its absolute Galois group.
For a finite group $\Gamma$, we define $S_{K,\Gamma}$ to be the
set of continuous homomorphisms $G_{K}\to\Gamma$. To each representation
$\tau:\Gamma\to\GL d{\cO_{K}},$ we can associate, among others, three
functions $S_{K,\Gamma}\to\RR$ denoted by $\ba_{\tau}$, $\bv_{\tau}$
and $\bw_{\tau}$.

The first one $\ba_{\tau}$ is called the \emph{Artin conductor}.
For $\rho\in S_{K,\Gamma}$, let $H$ be the image of $\rho$ and
$L/K$ the associated Galois extension having $H$ as the Galois group.
The Galois group $H$ has  the filtration by ramification subgroups with lower numbering
\[
H\supset H_{0}\supset H_{1}\supset\cdots,
\]
(see \cite{MR554237}). We define
\[
\ba_{\tau}(\rho):=\sum_{i=0}^{\infty}\frac{1}{(H_{0}:H_{i})}\codim\,(K^{d})^{H_{i}}.
\]

The second function $\bv_{\tau}$ is defined as follows. For $\rho\in S_{K,\Gamma}$,
let $\Spec M\to\Spec K$ be the corresponding étale $\Gamma$-torsor
and $\cO_{M}$ the integer ring of $M$. Note that the spectrum of
the extension $L$ mentioned above is a connected component of $\Spec M$.
Then the free $\cO_{M}$-module $\cO_{M}^{\oplus d}$ has two (left)
$\Gamma$-actions: firstly the action induced from the map
\[
\Gamma\xrightarrow{\tau}\GL d{\cO_{K}}\subset\GL d{\cO_{M}}
\]
and secondly the diagonal action induced from the given $\Gamma$-action
on $\cO_{M}$. We define the associated \emph{tuning submodule} $\Xi\subset\cO_{M}^{\otimes d}$ to be the
subset of those element on which the two actions coincide. It turns
out that $\Xi$ is a free $\cO_{K}$-module of rank $d$. We define
\[
\bv_{\tau}(\rho):=\frac{1}{\sharp\Gamma}\cdot\length\left(\frac{\cO_{M}^{\oplus d}}{\cO_{M}\cdot\Xi}\right).
\]
Note that we may also use $\cO_{L}$ and the subgroup $H=\Im(\rho)$
in the definition of $\bv_{\tau}$ instead of $\cO_{M}$ and $\Gamma$. 

The last function $\bw_{\tau}$ is called the \emph{weight function.}
Let $\rho\in S_{K,\Gamma}$, $M$ and $\Xi \subset \cO_M^{\oplus d}$ be as above.
We identify $\Xi$ with the subgroup of $\Gamma$-equivariant maps
of $\Hom _{\cO_K}(\cO_K^{\oplus d}, \cO_M)=\cO_M^{\oplus d}$.
For  an $\cO_{K}$-basis $\phi_{1},\dots,\phi_{d}$  of $\Xi$, we
define an $\cO_{K}$-algebra homomorphism $u^{*}:\cO_{K}[x_{1},\dots,x_{d}]\to\cO_{M}[y_{1},\dots,y_{d}]$
by $u^{*}(x_{i})=\sum_{j=1}^{n}\phi_{j}(x_{i})y_{j}$. Let $u:\AA_{\cO_{M}}^{d}\to\AA_{\cO_{K}}^{d}$
be the corresponding morphism of schemes and $o:\Spec k\hookrightarrow\AA_{\cO_{K}}^{d}$
the $k$-point at the origin. We define
\[
\bw_{\tau}(\rho):=\dim u^{-1}(o)-\bv_{\tau}(\rho).
\]

\begin{rem}
\label{rem: difference defs}Our definition of $\bw_{\tau}$ follows
the one in \cite{wild-p-adic} and is slightly different from the
one in the preceding paper \cite{Wood-Yasuda-I}. Let us consider
the left $\Gamma$-action on $\AA_{\cO_{K}}^{d}$ induced from the
one on $\cO_{K}[x_{1},\dots,x_{d}]$. In precise, an element $\gamma\in\Gamma$
gives the automorphism of $\AA_{\cO_{K}}^{d}$corresponding to the
automorphism on $\cO_{K}[x_{1},\dots,x_{d}]$ given by the inverse
$\gamma^{-1}$ of $\gamma$. In \cite{Wood-Yasuda-I} we defined 
\[
\bw_{\tau}(\rho):=\codim\left((\AA_{k}^{d})^{H_{0}},\AA_{k}^{d}\right)-\bv_{\tau}(\rho),
\]
where $H_0$ is the $0$-th ramification subgroup of $H$, that is, the inertia subgroup.
However, as proved in \cite{wild-p-adic}, the two definitions coincide
in the following three cases which we are mainly concerned with.
\begin{enumerate}
\item The order of the given group $\Gamma$ is invertible in $k$.
\item The given representation $\tau$ is a permutation representation. 
\item The given local field $K$ has positive characteristic, and hence
$K=k((t))$, and the image of $\tau$ is contained in $\GL dk$. 
\end{enumerate}
\end{rem}
\begin{defn}
For a function $c:S_{K,\Gamma}\to\RR$, we define the \emph{total
mass }of $(K,\Gamma,c)$ as\emph{ }
\[
M(K,\Gamma,c):=\frac{1}{\sharp\Gamma}\cdot\sum_{\rho:\in S_{K,\Gamma}}q^{-c(\rho)}\in\RR_{\ge0}\cup\{\infty\}.
\]

\end{defn}
For $r\in\NN$, let $K_{r}/K$ be the unramified extension of degree
$r$. We continue to denote the induced representations $\Gamma\xrightarrow{\tau}\GL n{\cO_{K}}\hookrightarrow\GL n{\cO_{K_{r}}}$
by $\tau$. Then we can consider total masses $M(K_{r},\Gamma,\ba_{\tau})$,
$M(K_{r},\Gamma,\bv_{\tau})$ and $M(K_{r},\Gamma,-\bw_{\tau})$,
and regard them as functions in $r$. 

Kedlaya studied total masses for $c=\ba_{\tau}$ and used it to interpret
and generalize Bhargava's mass formula \cite{MR2354798}. Wood \cite{MR2411405}
studied ones for different choices of $c$. On the context of the
wild McKay correspondence, explained below, functions $\bv_{\tau}$
and $\bw_{\tau}$ are concerned. For permutation representations,
functions $\ba_{\tau}$ and $\bv_{\tau}$ are related as follows.
\begin{lem}[\cite{Wood-Yasuda-I}]
\label{lem:permutation}If $\tau$ is a permutation representation,
then
\[
2\bv_{\tau}=\bv_{\tau\oplus\tau}=\ba_{\tau}.
\]

\end{lem}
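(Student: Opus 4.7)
I would split the lemma into its two equalities and handle them separately. The identity $2\bv_{\tau}=\bv_{\tau\oplus\tau}$ follows formally from additivity of the tuning submodule under direct sums: given representations $\tau_{1},\tau_{2}$ and $\rho\in S_{K,\Gamma}$, an element of $\cO_{M}^{\oplus(d_{1}+d_{2})}=\cO_{M}^{\oplus d_{1}}\oplus\cO_{M}^{\oplus d_{2}}$ satisfies the tuning condition if and only if each of its components does, so $\Xi_{\tau_{1}\oplus\tau_{2}}=\Xi_{\tau_{1}}\oplus\Xi_{\tau_{2}}$ and the length in the definition of $\bv_{\cdot}$ adds. Specializing $\tau_{1}=\tau_{2}=\tau$ gives the first identity.

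For the main equality $\bv_{\tau\oplus\tau}=\ba_{\tau}$ in the permutation case --- equivalently, $2\bv_{\tau}=\ba_{\tau}$ --- the strategy is to reduce to transitive representations and then relate both invariants to the discriminant of an associated étale algebra. Since both $\bv_{\cdot}$ and $\ba_{\cdot}$ are additive under decomposition of a permutation representation into orbits, I may assume $\tau\colon\Gamma\to S_{\Gamma/H}$ is transitive. Solving the tuning condition componentwise identifies $\Xi$ with the $\Gamma$-equivariant maps $\Gamma/H\to\cO_{M}$; parametrizing such a map by its value at the coset $eH$ (which must be $H$-fixed) gives $\Xi=\cO_{M}^{H}=\cO_{L}$, where $L=M^{H}$ is precisely the étale $K$-algebra associated to $G_{K}\xrightarrow{\rho}\Gamma\to S_{\Gamma/H}$.

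The core computation is then the following. Let $\alpha_{1},\dots,\alpha_{n}$ be an $\cO_{K}$-basis of $\cO_{L}$, and $\sigma_{1},\dots,\sigma_{n}\colon L\hookrightarrow M$ the distinct $K$-embeddings. The embedding $\cO_{L}\hookrightarrow\cO_{M}^{\oplus n}$ is given by the matrix $T=(\sigma_{i}(\alpha_{j}))$, and the classical identity
\[
(\det T)^{2}=\det(\Tr_{L/K}(\alpha_{i}\alpha_{j}))=\mathrm{disc}(L/K)
\]
yields $v_{M}(\det T)=\tfrac{e(M/K)}{2}\,v_{K}(\mathrm{disc}(L/K))$. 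The $\cO_{M}$-length of $\cO_{M}^{\oplus n}/\cO_{M}\cdot\cO_{L}$ is $v_{M}(\det T)$; multiplying by $f(M/K)$ to convert to $\cO_{K}$-length and using $e(M/K)f(M/K)=\sharp\Gamma$ (in the Galois case; otherwise $\Spec M$ decomposes as a disjoint union of copies of a single $\Gamma$-orbit and the argument reduces to the Galois case), the $1/\sharp\Gamma$ normalization in the definition of $\bv_{\tau}$ collapses to yield $\bv_{\tau}(\rho)=\tfrac{1}{2}\,v_{K}(\mathrm{disc}(L/K))$. Combined with the classical conductor-discriminant formula $\ba_{\tau}(\rho)=v_{K}(\mathrm{disc}(L/K))$ for permutation representations (Serre, \emph{Local Fields}, Ch.~VI), this yields $2\bv_{\tau}=\ba_{\tau}$.

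The main obstacle I anticipate is bookkeeping rather than conceptual content. One must fix the convention that the length in the definition of $\bv_{\tau}$ is the $\cO_{K}$-length, and track the factors $e(M/K)$ and $f(M/K)$ carefully so that the $1/\sharp\Gamma$ normalization collapses to exactly $\tfrac{1}{2}$. One must also treat the non-surjective case with care, decomposing $\Spec M$ as a $\Gamma$-set into copies of a single orbit so that both invariants decompose accordingly. The identity $(\det T)^{2}=\mathrm{disc}(L/K)$ and the conductor-discriminant formula are classical, so the technical substance of the proof lies in the length computation.
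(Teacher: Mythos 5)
The paper does not give its own proof here; Lemma~\ref{lem:permutation} is cited directly from \cite{Wood-Yasuda-I}, so a line-by-line comparison is not possible. That said, your argument is correct and is essentially the natural route. For the first equality, your observation that $\tau_1\oplus\tau_2$ acts block-diagonally so that $\Xi_{\tau_1\oplus\tau_2}=\Xi_{\tau_1}\oplus\Xi_{\tau_2}$ and hence the $\cO_K$-lengths add is right; this additivity of $\bv$ under direct sums is in fact also what the present paper invokes in Section~5.5 (the Kedlaya example). For the second equality, the chain of identifications is sound: for a transitive $\tau:\Gamma\to S_{\Gamma/H}$ and (after replacing $\Gamma$ by $\Im(\rho)$ as the paper permits) surjective $\rho$, the tuning module is the set of $\Gamma$-equivariant maps $\Gamma/H\to\cO_M$, hence $\Xi\cong\cO_M^H=\cO_M\cap M^H=\cO_L$; Smith normal form gives $\length_{\cO_M}(\cO_M^n/\cO_M\Xi)=v_M(\det T)$ with $T=(\sigma_i(\alpha_j))$; $(\det T)^2=\disc(\cO_L/\cO_K)$; passing from $\cO_M$-length to $\cO_K$-length multiplies by $f(M/K)$, and $e(M/K)f(M/K)=\sharp\Gamma$ cancels the $1/\sharp\Gamma$ normalization, giving $\bv_\tau(\rho)=\tfrac12 v_K(\disc(L/K))$; finally $\ba_\tau(\rho)=v_K(\disc(L/K))$ by the conductor--discriminant relation for $\Ind_H^\Gamma 1$. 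The one place you could tighten the writing: after replacing $\Gamma$ by $\Im(\rho)$, the original transitive action may become intransitive, so the reduction to a single orbit should be redone at that stage; but since both $\bv$ and $\ba$ decompose additively over orbits, nothing breaks.
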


\subsection{The wild McKay correspondence}

A representation $\tau:\Gamma\to\GL d{\cO_{K}}$ defines $\Gamma$-actions
on the polynomial ring $\cO_{K}[x_{1},\dots,x_{d}]$ as above. Let
$X$ be the quotient variety $\AA_{\cO_{K}}^{d}/\Gamma=\Spec\cO_{K}[x_{1},\dots,x_{d}]^{\Gamma}$,
which is a normal $\QQ$-Gorenstein $\cO_{K}$-variety. The following
theorem, which we call the wild McKay correspondence, connect total masses and stringy point counts.
This was proved in \cite{Wood-Yasuda-I,MR3230848} for special cases and in \cite{wild-p-adic} in full generality.

\begin{thm}[The wild McKay correspondence]
\label{thm:McKay realized}Suppose that the representation $\tau$
is faithful and the quotient morphism $\AA_{\cO_{K}}^{d}\to X$ is
étale in codimension one. We have 
\begin{gather*}
\sharp_{\st}^{r}(X)=M(K_{r},\Gamma,\bv_{\tau})\cdot q^{dr} \text{ and}\\
\sharp_{\st}^{r}(X)_{o}=M(K_{r},\Gamma,-\bw_{\tau}).
\end{gather*}

\end{thm}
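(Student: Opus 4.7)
The plan is first to reduce to the case $r=1$ by replacing $K$ with $K_{r}$: both sides are, by construction, invariant under this base change, and the unramified base extension is compatible with all of stringy invariants, tuning modules, and total masses. The central geometric observation is then that, since $\AA^{d}_{\cO_{K}}\to X$ is étale in codimension one and $X$ is normal, any $\cO_{K}$-point $\Spec\cO_{K}\to X$ whose generic fiber lands in the étale locus lifts, after pullback along a (essentially) unique étale $\Gamma$-torsor $\Spec\cO_{M}\to\Spec\cO_{K}$, to a $\Gamma$-equivariant morphism $\Spec\cO_{M}\to\AA^{d}_{\cO_{K}}$. Away from a measure-zero subset (the preimage of the branch locus), the set $X(\cO_{K})$ therefore stratifies as a disjoint union, indexed by $\Gamma$-conjugacy classes of $\rho\in S_{K,\Gamma}$, of sets of $\Gamma$-equivariant $\cO_{K}$-algebra maps $\cO_{K}[x_{1},\dots,x_{d}]\to\cO_{M}$.

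The next step is to compute the $p$-adic volume of each stratum with respect to the canonical measure used to define $\sharp_{\st}$. A $\Gamma$-equivariant $\cO_{K}$-algebra map $\cO_{K}[x_{1},\dots,x_{d}]\to\cO_{M}$ is exactly an element of $\Hom_{\cO_{K}}(\cO_{K}^{\oplus d},\cO_{M})^{\Gamma}=\Xi$, the tuning submodule attached to $\rho$. Choosing an $\cO_{K}$-basis $\phi_{1},\dots,\phi_{d}$ of $\Xi$ and comparing it to the standard $\cO_{M}$-basis of $\cO_{M}^{\oplus d}$, one finds a Jacobian whose $p$-adic valuation is $\length(\cO_{M}^{\oplus d}/\cO_{M}\cdot\Xi)$; dividing by $\sharp\Gamma$ to account for the $\Gamma$-orbit on torsor trivializations yields exactly $\bv_{\tau}(\rho)$ in the exponent. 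Summing these contributions over $\rho$ and including the overall normalization factor $q^{d}$ coming from the ambient $\AA^{d}$-measure (so that $\sharp_{\st}(\AA^{d}_{\cO_{K}})=q^{d}$) produces the first formula $\sharp_{\st}^{r}(X)=M(K_{r},\Gamma,\bv_{\tau})\cdot q^{dr}$.

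For the second formula, one restricts the same stratification to the subset $X(\cO_{K})_{o}$ of points reducing to the origin. Under the tuning-module parameterization this subset corresponds, via the morphism $u\colon\AA^{d}_{\cO_{M}}\to\AA^{d}_{\cO_{K}}$ determined by $(\phi_{j})$, to the $\Gamma$-equivariant $\cO_{M}$-points lying in $u^{-1}(o)$. A local fiber-dimension computation at the special fiber shows that imposing this condition contributes an extra factor of $q^{-\dim u^{-1}(o)}$ in the volume; combining with the $\bv_{\tau}(\rho)$ factor from before gives $q^{-(\dim u^{-1}(o)-\bv_{\tau}(\rho))}=q^{\bw_{\tau}(\rho)}$ in the exponent, and summing over $\rho$ yields $M(K_{r},\Gamma,-\bw_{\tau})$. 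The main obstacle is the measure-theoretic bookkeeping in the wild case: one has to check that the branch locus contributes zero measure, handle the failure of $\AA^{d}_{\cO_{K}}\to X$ to be étale on the special fiber when $p\mid\sharp\Gamma$ (so that lifts of $\cO_{K}$-points involve more than one $\cO_{M}$-point in a subtle way), and justify that $\dim u^{-1}(o)$ is the correct substitute for the codimension of the fixed locus of the inertia (as flagged in Remark \ref{rem: difference defs}); this last point is precisely where the original definition of $\bw_{\tau}$ in \cite{Wood-Yasuda-I} must be refined to the present one for the formula to remain valid in full generality.
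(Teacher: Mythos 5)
The paper does not actually prove Theorem \ref{thm:McKay realized}; it states the result and cites it: ``This was proved in \cite{Wood-Yasuda-I,MR3230848} for special cases and in \cite{wild-p-adic} in full generality.'' So there is no proof in this paper to compare your sketch against. What you have produced is a compressed reconstruction of the argument from those external references, not of anything in the present text.

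As a reconstruction, your sketch follows the genuine architecture of Yasuda's proof: reduce to $r=1$ by unramified base change; observe that (after removing a measure-zero locus) an $\cO_{K}$-point of $X$ whose generic fiber lies in the \'etale locus pulls back along $\AA^{d}_{\cO_{K}}\to X$ to a $\Gamma$-torsor over $K$, hence determines a class of $\rho\in S_{K,\Gamma}$; parametrize the stratum of such points by $\Gamma$-equivariant morphisms $\Spec\cO_{M}\to\AA^{d}_{\cO_{K}}$, i.e.\ by the tuning module $\Xi$; and compute the resulting change-of-variable factor in terms of $\length(\cO_{M}^{\oplus d}/\cO_{M}\cdot\Xi)$. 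That is the right shape, and the appearance of $\dim u^{-1}(o)$ for the origin-restricted count, rather than $\codim((\AA^{d}_{k})^{H_{0}},\AA^{d}_{k})$, is exactly the refinement flagged in Remark \ref{rem: difference defs}.

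However, several points you flag as ``bookkeeping'' are actually the substance of the proof, and your sketch neither verifies them nor points at how they are resolved. Concretely: (i) the stringy measure on $X(\cO_{K})$ is built from a chosen generator of $\omega_{X/\cO_{K}}^{\otimes m}$, and the Jacobian you invoke must be computed against \emph{that} measure, not the naive Haar measure on $\AA^{d}(\cO_{K})$ --- the relation between $\length(\cO_{M}^{\oplus d}/\cO_{M}\cdot\Xi)$ and this Jacobian is a nontrivial lemma, not an observation; (ii) the passage from ``a $K$-point lifts to a $\Gamma$-torsor over $K$'' to ``an $\cO_{K}$-point lifts to a $\Gamma$-equivariant map $\Spec\cO_{M}\to\AA^{d}_{\cO_{K}}$'' requires a normalization/extension argument (the fiber product over $\cO_{K}$ is generally not \'etale, and one must normalize and identify the result with $\Spec\cO_{M}$); (iii) the factor $q^{-\dim u^{-1}(o)}$ attributed to ``restricting to the origin'' is not a straightforward fiber-dimension count --- $u$ is typically not flat at the origin and the relevant computation is a count of lifts or an integral, which is precisely where the discrepancy between the old and new definitions of $\bw_{\tau}$ arises. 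In short, the outline is directionally correct, but as written it would not compile into a proof; the three items above are where the work actually lives in the cited reference.
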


The following corollary is a direct consequence of this theorem and
Proposition \ref{prop:explicit-formula-stringy}.
\begin{cor}
If $X$ admits a WL resolution and if $M(K_{r},\Gamma,\bv_{\tau})<\infty$
(resp. $M(K_{r},\Gamma,-\bw_{\tau})<\infty$) for every $r\in\NN$,
then the function $M(K_{r},\Gamma,\bv_{\tau})$ (resp. $M(K_{r},\Gamma,-\bw_{\tau})$)
is admissible.
\end{cor}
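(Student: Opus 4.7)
The plan is to invoke Theorem \ref{thm:McKay realized} to rewrite each total mass as a stringy point count and then appeal to the admissibility corollary immediately following Proposition \ref{prop:explicit-formula-stringy}. Specifically, the wild McKay correspondence gives
\[
M(K_{r},\Gamma,\bv_{\tau}) = \sharp_{\st}^{r}(X)\cdot q^{-dr}
\quad\text{and}\quad
M(K_{r},\Gamma,-\bw_{\tau}) = \sharp_{\st}^{r}(X)_{o},
\]
so the hypothesis that the total mass in question is finite for every $r\in\NN$ translates directly into finiteness of $\sharp_{\st}^{r}(X)$ (resp.\ $\sharp_{\st}^{r}(X)_{o}$) for every $r$.

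With finiteness in hand, together with the assumed existence of a WL resolution $f\colon Y\to X$, the hypotheses of the corollary following Proposition \ref{prop:explicit-formula-stringy} are satisfied (taking $C=X_{k}$ in one case and $C=\{o\}$ in the other). That corollary then yields admissibility of $\sharp_{\st}^{r}(X)$ and of $\sharp_{\st}^{r}(X)_{o}$ as functions of $r$.

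To finish, I note that $q^{-dr}$ itself lies in $\AF$: for any nonzero $c\in\QQ$ we have
\[
q^{-dr} = \frac{q^{(c-d)r}-q^{-dr}}{q^{cr}-1},
\]
which is of the required form. Since $\AF$ is closed under multiplication, the product $\sharp_{\st}^{r}(X)\cdot q^{-dr}=M(K_{r},\Gamma,\bv_{\tau})$ is admissible, while $M(K_{r},\Gamma,-\bw_{\tau})=\sharp_{\st}^{r}(X)_{o}$ is admissible directly.

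This is essentially a bookkeeping argument; there is no serious obstacle. The only points requiring a moment of care are the implicit hypotheses inherited from Theorem \ref{thm:McKay realized} (namely that $\tau$ is faithful and that $\AA_{\cO_{K}}^{d}\to X$ is \'etale in codimension one), without which the identifications in the first step would not be available, and the verification that the auxiliary factor $q^{-dr}$ genuinely belongs to the ring $\AF$.
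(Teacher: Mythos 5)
Your argument is correct and matches the paper's proof in structure: the paper simply declares the corollary a direct consequence of the wild McKay correspondence (Theorem \ref{thm:McKay realized}) together with the admissibility corollary following Proposition \ref{prop:explicit-formula-stringy}, which is precisely the route you take. Your extra step verifying that $q^{-dr}\in\AF$ (and your flagging of the inherited hypotheses on $\tau$) is sound bookkeeping that the paper leaves implicit.
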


\subsection{Dualities of total masses}

To discuss dualities, let us now assume that both functions $M(K_{r},\Gamma,\bv_{\tau})$
and $M(K_{r},\Gamma,-\bw_{\tau})$ have finite values for all $r$
and are admissible. The quotient variety $X$ associated to a faithful
representation $\tau:\Gamma\to\GL d{\cO_{K}}$ has a natural $\GG_{m,\cO_{K}}$-action.
Let $X^{*}$ be the complement of the zero section $\Spec\cO_{K}\hookrightarrow X$.
The $\GG_{m,k}$-action on $X_{k}^{*}$ is set-theoretically free.
From definition, we have
\[
\sharp_{\st}^{r}([X^{*}/\GG_{m,\cO_{K}}])=\frac{\sharp_{\st}^{r}(X)-\sharp_{\st}^{r}(X)_{o}}{q^{r}-1}.
\]
If there exists an ESWL resolution $Y\to X^{*}$, then from Proposition
\ref{prop:DualPX}, we have
\begin{equation}\label{estdual}
\frac{\sharp_{\st}^{r}(X)-\sharp_{\st}^{r}(X)_{o}}{q^{r}-1}=\DD\left(\frac{\sharp_{\st}^{r}(X)-\sharp_{\st}^{r}(X)_{o}}{q^{r}-1}\right)\cdot q^{r(d-1)}.
\end{equation}

\begin{rem}
When  $\Gamma =\ZZ/p\ZZ \subset \GL d k$ with $k$  a perfect field of characteristic $p>0$,
  the motivic version of \eqref{estdual} was checked in \cite[Prop.\ 6.36]{MR3230848},  by using 
  not resolution but an explicit formula for motivic total masses. 
\end{rem}

If $\AA_{\cO_{K}}^{d}\to X$ is étale in codimension one, then 
the wild McKay correspondence, Theorem \ref{thm:McKay realized}, shows that the last equality is equivalent to: 
\begin{multline}
M(K_{r},\Gamma,\bv_{\tau})\cdot q^{rd}-M(K_{r},\Gamma,-\bw_{\tau})\\
=\DD(M(K_{r},\Gamma,-\bw_{\tau}))\cdot q^{rd}-\DD(M(K_{r},\Gamma,\bv_{\tau})).\label{e2}
\end{multline}
We call this the \emph{weak duality}. 
The equality may be regarded as a constraint imposed by the existence
of an ESWL resolution of $X^{*}$. If this duality fails for some
representation $\tau$, then the associated $X^{*}$ would have no
ESWL resolution. In particular, it is interesting to ask the following
question.
\begin{question}\label{Q:positive_char_weak_dual}
Suppose that $K$ has positive characteristic so that $K=k((t))$,
that $\tau:\Gamma\to\GL d{\cO_{K}}$ is a faithful representation
factoring through $\GL nk$ and that the quotient morphism $\AA_{\cO_{K}}^{d}\to\AA_{\cO_{K}}^{d}/\Gamma$
is étale in codimension one. Then, does the weak duality always
hold?
\end{question}
If this has the negative answer and some representation $\tau$ does
not satisfy (\ref{e2}), then the associated $k$-variety $X_{k}=\AA_{k}^{d}/\Gamma$
does not admit any $\GG_{m,k}$-equivariant WL resolution. 

In some examples,  even a stronger
equality,
\begin{equation}
\DD(M(K_{r},\Gamma,-\bw_{\tau}))=M(K_{r},\Gamma,\bv_{\tau}),\label{eqmd}
\end{equation}
 holds: we call this the \emph{strong duality}.
 A special case of this  was the duality involving
Bhargava's formula and the Hilbert scheme of points observed in \cite{Wood-Yasuda-I} (see Section \ref{subsec:Bhargava}).
Actually, as far as the authors know, even the strong duality holds in all examples satisfying the assumption as in Question \ref{Q:positive_char_weak_dual}.  It is thus natural to ask:

\begin{question}\label{Q:positive_char_strong_dual}
With the same assumption as in Question \ref{Q:positive_char_weak_dual},  does the strong duality  always
hold?
\end{question}

\section{Examples\label{sec:Examples}}

\subsection{The tame case}

We first consider the tame case.
Suppose  $p\nmid \sharp \Gamma$. 
Let $\tau : \Gamma \to \GL d{\cO_K}$ be a representation. Then $S_{K,\Gamma}$ is a finite set and every $\rho \in S_{K,\Gamma}$ factors through the Galois group $G_K^\tame$ of the maximal tamely ramified extension
$ K^\tame /K$. It is topologically generated by two elements $a$ and $b$ with one relation 
$bab^{-1}=a^q$ (see \cite[page 410]{MR2392026}). Here $a$ is the topological generator of the inertia subgroup of $G_K^\tame$.
Therefore $S_{K,\Gamma}$ is in a one-to-one correspondence with
\[
\{  (g,h) \in \Gamma^2 \mid hgh^{-1}=g^q \}.
\]
The last set admits the involution $(g,h)\mapsto (g^{-1},h)$.
Let $\iota$ be the corresponding involution of $S_{K,\Gamma}$.

\begin{lem}
For $\rho\in S_{K,\Gamma}$, we have $\bv_\tau (\iota (\rho) )  = \bw _\tau(\rho) $.
\end{lem}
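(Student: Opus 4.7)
The plan is to reduce both $\bv_\tau(\rho)$ and $\bw_\tau(\rho)$ to the classical \emph{age} invariant of the inertia generator $g$, and then to observe that sending $g$ to $g^{-1}$ interchanges the age and the co-age.

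First, since $p \nmid |H_0| = e$, the restriction $\tau|_{\langle g\rangle}$ is semisimple: after an unramified base change to a field containing a primitive $e$-th root of unity $\zeta_e$ (which leaves $\bv_\tau(\rho)$ and $\bw_\tau(\rho)$ unchanged because they depend only on the inertial data), one may diagonalize $\tau(g)$ with eigenvalues $\zeta_e^{a_1},\dots,\zeta_e^{a_d}$, $0 \le a_i < e$, and this multiset is intrinsic to $\rho$.

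Next I would establish the age formula $\bv_\tau(\rho) = \sum_i a_i / e$. With $\cO_L = \cO_{K_s}[\pi_L]$ and $\pi_L^e \in \pi_K \cO_{K_s}^\times$, the $g$-eigenspace in $\cO_L$ for the eigenvalue $\zeta_e^a$ is $\pi_L^a \cO_{K_s}$; so any tuning element has its $i$th coordinate in $\pi_L^{a_i} \cO_{K_s}\cdot e_i$, and imposing $h$-equivariance cuts $\Xi$ down to a free $\cO_K$-form without changing the $\cO_L$-span $\cO_L \cdot \Xi = \bigoplus_i \pi_L^{a_i} \cO_L \cdot e_i$. Computing the length of the cokernel $\bigoplus_i \cO_L/\pi_L^{a_i}\cO_L$ and dividing by $|\Gamma|$ produces the age. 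By Remark~\ref{rem: difference defs}(1) the tame hypothesis gives $\dim u^{-1}(o) = \codim((K^d)^{H_0}, K^d) = \#\{i : a_i > 0\}$, whence
\[
\bw_\tau(\rho) = \#\{i : a_i > 0\} - \sum_i \frac{a_i}{e} = \sum_{a_i > 0} \frac{e - a_i}{e}.
\]

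For the conclusion, since $\iota(\rho) \leftrightarrow (g^{-1}, h)$, the eigenvalues of $\tau(g^{-1})$ are $\zeta_e^{-a_i}$; reducing the exponents into $[0, e)$ gives $e - a_i$ when $a_i > 0$ and $0$ when $a_i = 0$. Applying the age formula to $\iota(\rho)$ yields
\[
\bv_\tau(\iota(\rho)) = \sum_{a_i > 0} \frac{e - a_i}{e} = \bw_\tau(\rho),
\]
as required. The main point to verify carefully is that the descent of $\Xi$ from an $\cO_{K_s}$-module to an $\cO_K$-form via $h$-equivariance does not affect the $\cO_L$-span (and hence the length computation); this should be routine because $h$ acts $\cO_K$-semilinearly through the residue-field automorphism and preserves each inertia eigenspace, so extending scalars to $\cO_L$ washes out the descent.
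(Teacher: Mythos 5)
Your proposal is correct and follows essentially the same route as the paper: pass to the (completed) maximal unramified extension to diagonalize $\tau(g)$ with eigenvalues $\zeta^{a_1},\dots,\zeta^{a_d}$, express $\bv_\tau(\rho)$ as the age $\frac{1}{e}\sum_i a_i$, use the tameness remark to identify $\bw_\tau(\rho)$ with the co-age $\sum_{a_i>0}\frac{e-a_i}{e}$, and observe that $g\mapsto g^{-1}$ replaces each nonzero $a_i$ by $e-a_i$. The only difference is that you rederive the age formula for $\bv_\tau$ via the tuning module, whereas the paper simply cites it from \cite{Wood-Yasuda-I}; your descent remark at the end is the right point to flag but is indeed routine for the reason you give.
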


\begin{proof}
Let $K'$ be the completion of the maximal unramified extension of $K$. 
Both $\rho$ and $\iota(\rho)$ define the same cyclic extension $L/K'$  of order $l:=\ord(g)$
with $(g,h)\in \Gamma^2$ corresponding to $\rho$. 
We can regard the element $a \in G^\tame_K$ also as a generator of $\Gal(L/K')$.
Let $\varpi\in L$ be a uniformizer and $\zeta \in  K'$ an $l$-th root of unity such that
$a(\varpi) = \zeta \varpi$. 

If $\zeta^{a_1},\dots,\zeta^{a_d}$ with $0 \le a_i < l$ are the eigenvalues of $g \in \Gamma\subset \GL d { K'}$,
then, from \cite[Lem.\ 4.3]{Wood-Yasuda-I}, we have 
\[
 \bv_\tau (\rho) = \frac{1}{l} \sum _{i=1}^l a_i . 
\]

In the tame case, our definition of $\bw_\tau $ coincides with the one in \cite{Wood-Yasuda-I} 
(see Remark \ref{rem: difference defs}).
Therefore
\begin{align*}
\bw_\tau (\rho) 
 &= \sharp \{ i \mid a_i \ne 0 \}  - \bv_\tau(\rho) \\
& = \frac{1}{l} \sum _{a_i \ne 0 } l -a_i \\
& = \bv_\tau(\iota(\rho)).
\end{align*}
\end{proof}

From the lemma, 
\begin{align*}
 M(K,\Gamma, - \bw_\tau) 
 & = \frac {1}{\sharp \Gamma}\sum_{\rho\in S_{K,\gamma}}  q ^{\bw_\tau(\rho)} \\
 & = \frac {1}{\sharp \Gamma}\sum_{\rho\in S_{K,\gamma}}  q ^{\bv_\tau(\iota(\rho))} \\
 &= \frac {1}{\sharp \Gamma}\sum_{\rho\in S_{K,\gamma}}  q ^{\bv_\tau(\rho)} \\
 & = \DD (M(K,\Gamma,\bv_\tau)).
\end{align*}
The strong duality thus holds in the tame case. 
Moreover it is a term-wise duality: $q^{\bw_\tau(\rho)}$ and $q ^{\bv_\tau(\iota(\rho))}$ are dual to each
other. This is no longer true in wild cases satisfying dualities.

\subsection{Quadratic extensions: characteristic zero}

We consider an unramified extension $K$ of $\QQ_2$ and compute some total masses counting quadratic extensions of $K$. Namely we consider the case $\Gamma = \ZZ/ 2 \ZZ $. 
Quadratic extensions are divided into 4 classes: let $\fm_K$ be the maximal ideal of $\cO_K$. 
\begin{itemize}
\item The  trivial extension $K\times K$.
\item The  unramified field extension of degree two.
\item Ramified extensions  with discriminant $\fm_K^2$. There are $2(q-1)$ of them.
\item Ramified extensions with discriminant $\fm_K^3$. There are $2q$ of them.
\end{itemize}
For instance, this follows from Krasner's formula \cite{MR0225756} (see also \cite{MR500361}).

They are in one-to-one correspondence with elements of $S_{K,\Gamma}$.
Let $\sigma$ be the two-dimensional representation of $\Gamma = \ZZ/ 2 \ZZ $ by the transposition and for $n\in \NN$, $\sigma_n$ the direct sum of $n$ copies of $\sigma$.
From \cite[Lem.\ 2.6]{Wood-Yasuda-I},  the values  $\ba_{\sigma}(\rho)$, $\rho\in S_{K,\Gamma}$ are respectively $0, 0,  2, 3$
depending on the corresponding class above of quadratic extensions. 
We have
\begin{align*}
M(K,\Gamma,\bv_{\sigma_{n}}) & =M(K,\Gamma,\ba_{\sigma_n}/2) \\
& = \frac{1}{2} (1+1+ 2(q-1)q^{-2n/2}+2q\cdot q^{-3n/2})\\
&= 1+q^{-n+1}-q^{-n}+q^{-3n/2+1}.
\end{align*}

As for the total mass with respect to $-\bw_{\sigma_{n}}$, we have 
$\bw_{\sigma_{n}} (\rho)= n - \bv_{\sigma_{n}}(\rho)$ if $\rho$ corresponds to a totally ramified extension, and $\bw_{\sigma_{n}} (\rho)=\bv_{\sigma_{n}}(\rho)$ otherwise. Therefore,
\begin{align*}
M(K,\Gamma,\bw_{\sigma_{n}}) & = \frac{1}{2} (1+1+ 2(q-1)q^{-n+n}+2q\cdot q^{-3n/2+n})\\
&= q+q^{-n/2+1}.
\end{align*}

We easily see that the strong duality holds for $n=1$. 
For $n>1$, the strong duality does not hold, but the weak one holds.

This phenomenon seems to be rather general. For  a few other examples
of permutation representations $\sigma $ of cyclic groups, the authors checked that the weak duality holds, but the strong duality does not always hold. 
From these computations as well as the example in section \ref{subsec:Bhargava}, 
it is natural to ask:

\begin{question}\label{Q:permutation_dual}
Suppose that the given representation $\tau$ is a permutation representation.
Does the weak duality \eqref{e2} always hold? Does the quotient scheme $\AA_{\cO_K}^d/\Gamma$ always admit an ESWL resolution?
\end{question}

\subsection{Quadratic extensions: characteristic two}

Next we consider the case $K=\FF_q((t))$ with $q$ a power of two
and $\Gamma =\ZZ/2\ZZ$. 
As in the last example, there are exactly two unramified quadratic extensions of $K$: the trivial one $K\times K$ and the unramified field extension.
As for the ramified extensions, for $i \in \NN$, there are exactly 
$2(q-1)q^{i-1}$ extensions with discriminant $\fm_K^{2i}$. There is no extension with discriminant of odd exponent. This again follows from Krasner's formula. 

Let $\sigma$ be the two dimensional $\Gamma$-representation by transposition and 
$\sigma_n$ the direct sum of $n$ copies of it as above. We have
\begin{align*}
M(K,\Gamma,\bv_{\sigma_{n}}) &= 1+ \sum_{i=1}^\infty (q-1)q^{i-1}q^{-in} \\
&= 1 + \frac{(q-1)q^{-n}}{1-q^{-n+1}},
\end{align*}
and
\begin{align*}
M(K,\Gamma,-\bw_{\sigma_{n}}) &= 1+ \sum_{i=1}^\infty (q-1)q^{i-1}q^{-in+n} \\
&= 1 + \frac{q-1}{1-q^{-n+1}},
\end{align*}
We see that the strong duality and hence the weak duality hold for every $n$. 
Thus the answer to Question \ref{Q:positive_char_strong_dual} in this case is positive.

For an integer $m\ge 0$, let us now consider the representation
 $\tau_m:\Gamma \to \GL {2}{\cO_K}$ given by the matrix 
 \[
  \begin{pmatrix}
  1 & t^m \\
  0 & 1
  \end{pmatrix}.
 \]
We have $\tau_0 \cong \sigma$. If $m>0$, then $\tau _m$ is not defined over $k=\FF_q$. 
 
To compute the functions $\bv_{\tau_m}$ and $\bw_{\tau_m}$, let $L$ be a ramified 
quadratic extension of $K$ with discriminant $\fm_K ^{2i}$, $\iota$ its unique $K$-involution  and $\varpi$ its uniformizer. 
Then  $\delta(\varpi):=\iota (\varpi)-\varpi \in \cO_K$ and $v_K( \delta(\varpi))=i$ with $v_K$ the normalized valuation of $K$.  
Therefore the tuning submodule $\Xi$ is computed as follows:
\begin{align*}
\Xi  & = \{ (x,y)\in \cO_L^{\oplus 2} \mid x + t^m y = \iota (x),\, y = \iota (y) \} \\
& = \{ (x,  t ^{-m} \delta (x) ) \mid x\in \cO_L,\, v_K(\delta(x))\ge m  \} \\
& = \{ (x,  t ^{-m} \delta (x) ) \mid x\in \cO_K \cdot 1 \oplus \cO_K \cdot t^a \varpi   \} \\
& = \langle (1,0),\, ( t^a \varpi, t ^{a-m} \delta(\varpi) )  \rangle _{\cO_K},
\end{align*}
where $a = \sup \{  0 , m -i\}$. 
Therefore, 
\[
\frac{\cO_L^{\oplus 2}}{\cO_L  \cdot \Xi} \cong \frac{\cO_L} {\cO_L \cdot t^{a-m} \delta(\varpi)}
\cong \frac{\cO_L}{\cO_L \cdot t^{a-m+i}} .  
\]
For $\rho\in S_{K,\Gamma}$ corresponding to $L$, 
\[
\bv_{\tau_m} (\rho) = a-m+i = 
\begin{cases}
  0 & (i \le m)\\
  i -m & (i > m) .
 \end{cases}
\]

The map $u^* :\cO_K [x,y]\to \cO _L [X,Y]$ in the definition of $\bw$ is explicitly given as
\[
u^*(x) = X + t^a \varpi Y,\, u^* (y) = t^{a-m}\delta(\varpi) Y.
\] 
The induced morphism $\Spec k[X,Y] \to \Spec k[x,y] $ is an isomorphism
if $i \le m$ and a linear  map of rank one if $i>m $. 
Therefore
\[
\bw_{\tau_m}(\rho)= 
\begin{cases}
0 & (i \le m) \\
1 - i + m & (i >m).
\end{cases}
\]
To obtain  quotient morphisms which are étale in codimension one, we consider 
the direct sums $\upsilon_{m,n} := \tau_m \oplus \sigma_n$. 
We have
\begin{align*}
M(K,\Gamma, \bv_{\upsilon_{m,n}}) 
& = 1 + \sum _{i = 1}^{m} (q-1)q^{i-1} \cdot q^{-in} +\sum _{i= m+1}^\infty (q-1)q^{i-1} \cdot q^{- i + m -in}\\
& = 1 + (q-1)q^{-n}\frac{1-q^{(1-n)m}}{1-q^{1-n}}+ (q-1)\frac{q^{m-(m+1)n-1}}{1-q^{-n}}
\end{align*} 
and
\begin{align*}
M(K,\Gamma, - \bw_{\upsilon_{m,n}}) 
& = 1 + \sum _{i = 1}^{m} (q-1)q^{i-1} \cdot q^{-in+n} +\sum _{i= m+1}^\infty (q-1)q^{i-1} \cdot q^{1 - i + m -in+n}\\
& = 1 + (q-1)\frac{1-q^{(1-n)m}}{1-q^{1-n}}+ (q-1)\frac{q^{m-mn}}{1-q^{-n}}
\end{align*} 

One can easily see that the weak duality does not generally hold, for instance, by putting $(m,n)=(1,1)$.
From our viewpoint, this is explained by that the entry $t^m$ in the matrix defining $\tau_m$ 
causes the degeneration of singularities of $\AA_{\cO_K}^d/\Gamma$, namely
makes the family over $\cO_K$ non-equisingular.

\subsection{Bhargava's formula}\label{subsec:Bhargava}

We discuss an example from the preceding paper \cite{Wood-Yasuda-I}
and the duality observed there. Let $S_{n}$ be the $n$-th symmetric
group and $\sigma:S_{n}\to\GL d{\cO_{K}}$ the standard permutation
representation. According to Kedlaya \cite{MR2354797}, Bhargava's
mass formula for étale extensions of a local field \cite{MR2354798}
is formulated as
\[
M(K,S_{n},\ba_{\sigma})=\sum_{m=0}^{n-1}P(n,n-m)\cdot q^{-m}.
\]
Here $P(n,n-m)$ denotes the number of partitions of $n$ into exactly
$n-m$ parts. 

Let 
\[
\tau:=\sigma\oplus\sigma:S_{n}\to\GL{2n}{\cO_{K}}.
\]
From Lemma \ref{lem:permutation}, we have 
\[
M(K,S_{n},\bv_{\tau})=M(K,S_{n},\ba_{\sigma})=\sum_{m=0}^{n-1}P(n,n-m)\cdot q^{-m}.
\]
In \cite{Wood-Yasuda-I}, we verified that
\[
M(K,S_{n},-\bw_{\tau})=\sum_{m=0}^{n}P(n,n-m)\cdot q^{m}.
\]
These show that the functions $M(K_{r},S_{n},-\bw_{\tau})$ and $M(K_{r},S_{n},\bv_{\tau})$
satisfy the strong duality (\ref{eqmd}). Thus the answer to Question \ref{Q:permutation_dual} is positive in this situation.

The quotient scheme $X=\AA_{\cO_{K}}^{2n}/S_{n}$ associated to $\tau$
is nothing but the $n$-th symmetric product of the affine plane over
$\cO_{K}$. It admits a special ESWL resolution, namely the Hilbert-Chow
morphism 
\[
f:\Hilb^{n}(\AA_{\cO_{K}}^{2})\to X
\]
from the Hilbert scheme of $n$ points on $\AA_{\cO_{K}}^{2}$ defined
over $\cO_{K}$. 

Using a stratification of $\Hilb^{n}(\AA_{k}^{2})$ into affine spaces
\cite{MR870732,MR2492446}, we can directly show
\begin{gather*}
\sharp\Hilb^{n}(\AA_{k}^{2})=\sum_{m=0}^{n}P(n,n-m)\cdot q^{2n-m}.
\end{gather*}
These together with Theorem \ref{thm:McKay realized} gives a new
proof of Bhargava's formula without using Serre's mass formula \cite{MR500361}
unlike in \cite{MR2354798,MR2354797}.

\subsection{Kedlaya's formula}

We suppose $p\ne2$ and let $G$ be the group of signed permutation
matrices in $\GL n{\cO_{K}}$. A signed matrix is a matrix such that
every row or every column contains one and only one nonzero entry
which is either 1 or -1. The group is isomorphic to the wreath product
$(\ZZ/2\ZZ)\wr S_{n}$. Let 
\[
\iota:G\hookrightarrow\GL n{\cO_{K}}
\]
be the inclusion and $\tau:=\iota\oplus\iota$.
\begin{lem}
We have $\ba_{\iota}=\bv_{\tau}$. \end{lem}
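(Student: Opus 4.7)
My plan is to exhibit the signed permutation representation $\iota$ as a direct summand, over $\cO_K$, of a genuine permutation representation of $G$, and then reduce to Lemma~\ref{lem:permutation}.

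First I would record that both $\ba$ and $\bv$ are additive on direct sums. For $\ba$, this follows at once from the defining formula, since $(K^{d_1+d_2})^{H_i} = (K^{d_1})^{H_i} \oplus (K^{d_2})^{H_i}$ and codimensions add. For $\bv$, the tuning submodule associated to $\tau_1 \oplus \tau_2$ splits as $\Xi_{\tau_1} \oplus \Xi_{\tau_2}$, because both the representation-valued and the diagonal $\Gamma$-actions on $\cO_M^{\oplus(d_1+d_2)}$ are block-diagonal; hence the lengths of the quotients add.

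Next I realize $\iota$ inside a permutation representation. The wreath product $G = (\ZZ/2\ZZ)^n\rtimes S_n$ acts on the $2n$-element set $\{f_i^+, f_i^-\}_{i=1}^n$ by sending $f_i^{\pm}$ to $f_{s(i)}^{\pm}$ when $\epsilon_{s(i)}=0$ and swapping the superscripts otherwise. Let $\pi$ be the resulting degree-$2n$ permutation representation over $\cO_K$. Setting $g_i := f_i^+ - f_i^-$ and $h_i := f_i^+ + f_i^-$, a direct computation gives $g_i \mapsto (-1)^{\epsilon_{s(i)}} g_{s(i)}$ and $h_i \mapsto h_{s(i)}$. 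Thus the $g_i$ span a subrepresentation isomorphic to $\iota$, while the $h_i$ span a subrepresentation $\pi'$ isomorphic to the pullback via $G\twoheadrightarrow S_n$ of the standard permutation representation of $S_n$; in particular $\pi'$ is itself a permutation representation. Since $p\ne 2$, the change of basis from $\{f_i^\pm\}$ to $\{g_i, h_i\}$ is invertible over $\cO_K$, yielding an $\cO_K[G]$-module decomposition $\pi \cong \iota \oplus \pi'$.

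To conclude, I apply Lemma~\ref{lem:permutation} to the permutation representations $\pi$ and $\pi'$, obtaining $\ba_\pi = 2\bv_\pi$ and $\ba_{\pi'} = 2\bv_{\pi'}$. Combined with the additivity from the first step,
\[
\ba_\iota = \ba_\pi - \ba_{\pi'} = 2\bv_\pi - 2\bv_{\pi'} = 2\bv_\iota = \bv_{\iota\oplus\iota} = \bv_\tau.
\]
The point requiring the most care is the integral splitting $\pi \cong \iota \oplus \pi'$: rationally it is obvious via the idempotents $(1\pm \mathrm{sign})/2$, but the change of basis from $\{f_i^\pm\}$ to $\{g_i,h_i\}$ has determinant a power of $2$, which is precisely where the hypothesis $p\ne 2$ is used.
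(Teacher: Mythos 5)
Your proof is correct and follows the paper's overall strategy: realize $\iota$ as an $\cO_K$-direct summand of a permutation representation $\pi$ of $G$, then reduce to Lemma~\ref{lem:permutation} using additivity of $\ba$ and $\bv$ on direct sums. The one place where you diverge is in identifying the complementary summand, and it is worth flagging: the paper writes the decomposition as $\iota'\cong\iota\oplus\tau$ with $\tau$ the \emph{trivial} $n$-dimensional representation and then invokes $\ba_\tau\equiv\bv_\tau\equiv 0$, whereas you (correctly) identify the complement as $\pi'$, the pullback to $G$ of the standard permutation representation of $S_n$, spanned by $h_i=f_i^++f_i^-$. The two agree only for $n=1$. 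Already for $n=2$ the transposition $(12)$ (with no sign flips) fixes a $2$-dimensional subspace of $\iota'$, while $\iota\oplus\mathrm{triv}_2$ would have a $3$-dimensional fixed subspace, so $\iota'\not\cong\iota\oplus\mathrm{triv}_n$ in general. Because $\pi'$ is itself a permutation representation, your fix is to apply Lemma~\ref{lem:permutation} a second time, giving $\ba_{\pi'}=2\bv_{\pi'}$, and cancel this contribution from both sides; this repairs the paper's argument and yields the asserted $\ba_\iota=2\bv_\iota=\bv_\tau$. Your remark that the integral splitting $\pi\cong\iota\oplus\pi'$ uses $p\ne 2$ (the change of basis has determinant a power of $2$) is also the right thing to highlight.
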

\begin{proof}
We construct a representation $\iota':G\to\GL{2n}{\cO_{K}}$ by replacing
entries of matrices in $G$ as follows: replace 0 with $\begin{pmatrix}0 & 0\\
0 & 0
\end{pmatrix}$, $1$ with $\begin{pmatrix}1 & 0\\
0 & 1
\end{pmatrix}$ and $-1$ with $\begin{pmatrix}0 & 1\\
1 & 0
\end{pmatrix}$. The obtained $\iota'$ is a permutation representation of $G$,
and from Lemma \ref{lem:permutation}, 
\[
\ba_{\iota'}=\bv_{\iota'\oplus\iota'}.
\]
If $\tau:G\to\GL n{\cO_{K}}$ is the trivial representation, then
$\iota'\cong\iota\oplus\tau$. The facts that $\ba_{\tau}\equiv\bv_{\tau}\equiv0$
and that $\ba_{\iota\oplus\tau}=\ba_{\iota}+\ba_{\tau}$ and $\bv_{\iota\oplus\tau}=\bv_{\iota}+\bv_{\tau}$,
which were proved in \cite{Wood-Yasuda-I}, prove the lemma.
\end{proof}
We then have 
\begin{equation}
M(K,G,\bv_{\tau})=M(K,G,\ba_{\iota})=\sum_{j=0}^{n}\sum_{i=0}^{j}P(j,i)P(n-j)q^{i-n},\label{fK}
\end{equation}
where the right equality is due to Kedlaya \cite[Remark 8.6]{MR2354797} and $P(m)$ denotes
the number of partitions of $m$.

We can construct a resolution of $X:=\AA_{\cO_{K}}^{2n}/G$ as follows.
First consider the involution of $\AA_{\cO_{K}}^{2}=\Spec\cO_{K}[x,y]$
sending $x$ to $-x$ and $y$ to $-y$. Let 
\[
Z:=\AA_{\cO_{K}}^{2}/(\ZZ/2\ZZ)=\Spec\cO_{K}[x^{2},xy,y^{2}]
\]
be the associated quotient scheme. We have a natural isomorphism
\[
X\cong S^{n}Z:=Z^{n}/S_{n}.
\]
Since $Z$ is the trivial family of $A_{1}$-singularity over $\Spec\cO_{K}$
(actually it is a toric variety), there exists the minimal resolution
$W\to Z$ of $Z$ such that $W$ is $\cO_{K}$-smooth, the exceptional
locus is isomorphic to $\PP_{\cO_{K}}^{1}$ and $K_{W/Z}=0$. Let
$\Hilb^{n}(W)$ be the Hilbert scheme of $n$ points of $W$ over
$\cO_{K}$, which is again smooth over $\cO_{K}$. We have the proper
birational morphism 
\[
f:\Hilb^{n}(W)\xrightarrow{\text{Hilbert-Chow}}S^{n}W\to S^{n}Z\xrightarrow{\sim}X
\]
and $K_{\Hilb^{n}(W)/X}=0$. 

We now compute $\sharp f^{-1}(o)(k)$. Let $E\subset W_{k}$ be the
exceptional divisor of $W_{k}\to Z_{k}$, which is isomorphic to $\PP_{k}^{1}$.
Each $k$-point of $f^{-1}(o)$ corresponds to a zero-dimensional subscheme
of $W_{k}$ of length $n$ supported in $E$. There exists a stratification
\[
E=E_{1}\sqcup E_{0}
\]
such that $E_{1}\cong\AA_{k}^{1}$ is a coordinate line of an open
subscheme $\AA_{k}^{2}\cong U\subset W_{k}$ and $E_{0}\cong\Spec k$
is the origin of another open subscheme $\AA_{k}^{2}\cong U'\subset W_{k}$.
Let $C_{m}$ be the set of zero-dimensional subschemes $U$ of length
$m$ supported in $E_{1}$ and $D_{m}$ the set of zero-dimensional
subschemes of $U'$ of length $m$ supported in $E_{0}$. From \cite[Corollary 3.1]{MR2492446},
\[
\sharp C_{m}=P(m)q^{m}
\]
and 
\[
\sharp D_{m}=\sum_{i=0}^{m}P(m,i)q^{m-i}.
\]
Since $f^{-1}(o)(k)$ decomposes as
\[
f^{-1}(o)(k)=\bigsqcup_{j=0}^{n}C_{n-j}\times D_{j},
\]
we have 
\begin{align*}
\sharp f^{-1}(o)(k) & =\sum_{j=0}^{n}\left(P(n-j)q^{n-j}\sum_{i=0}^{j}P(j,i)q^{j-i}\right)\\
 & =\sum_{j=0}^{n}\sum_{i=0}^{j}P(j,i)P(n-j)q^{n-i}.
\end{align*}
From the wild McKay correspondence,
\[
M(K,G,-\bw_{\tau})=\sum_{j=0}^{n}\sum_{i=0}^{j}P(j,i)P(n-j)q^{n-i}.
\]
Comparing this with \eqref{fK},
we verify the strong duality for $M(K_{r},G,-\bw_{\tau})$
and $M(K_{r},G,\bv_{\tau})$.

In a similar way as to compute $\sharp f^{-1}(o)(k)$, we can directly
compute $\sharp\Hilb^{n}(W)(k)$ and obtain a new proof of Kedlaya's
formula (\ref{fK}). 

\section{Concluding remarks and extra problems}\label{sec:Concluding_remarks}

As far as we computed, we observed the following phenomena.
The weak duality and the strong duality may fail.  
Tame representations satisfy both the strong and weak dualities.
Permutation representations satisfy the weak duality but not necessarily the strong duality.
When $K= \FF_q((t))$, representations defined over $\FF_q$ satisfy both the strong and weak dualities. 

 We may interpret these as follows. We might be able to measure by the two dualities 
 how equisingular the family $\AA_{\cO_K}^d/\Gamma$ is. If the strong duality holds, then the family 
 would be very equisingular. If only the weak duality holds, then the family would be moderately so.
 If both dualities fail, then it is rather far from being  equisingular.
It may be interesting to look for a numerical invariant refining this measurement. 

As we saw, the weak duality is derived from the Poincaré duality of stringy invariants if
there exists an ESWL resolution. 
What about the strong duality? Is there any geometric interpretation of it?

Although it is still conjectural, there would be the motivic counterparts of total masses, stringy point counts and the McKay correspondence between them (see \cite{Wood-Yasuda-I,MR3230848,Yasuda:2013fk,Yasuda:2014fk2}). We may discuss dualities in this motivic context as well.
Indeed, some dualities for motivic invariants were verified in \cite{MR3230848,Yasuda:2014fk2}.

\bibliographystyle{alpha}
\bibliography{/Users/Takehiko/Dropbox/Math_Articles/mybib.bib}

\end{document}